\newtheorem{thm}{Theorem}[section]
\newtheorem{cor}[thm]{Corollary}
\newtheorem{lem}[thm]{Lemma}
\newtheorem{prop}[thm]{Proposition}
\theoremstyle{definition}
\newtheorem{defn}[thm]{Definition}
\theoremstyle{remark}
\newtheorem{rem}[thm]{Remark}
\numberwithin{equation}{section}
\newcommand{\add}{\operatorname{add}}
\newcommand{\End}{\operatorname{End}}
\newcommand{\Ext}{\operatorname{Ext}}
\newcommand{\Hom}{\operatorname{Hom}}
\newcommand{\Id}{\operatorname{Id}}
\newcommand{\proj}{\operatorname{proj}}
\newcommand{\rep}{\operatorname{rep}}
\newcommand{\Tr}{\operatorname{Tr}}
\newcommand{\CC}{\mathcal{C}}
\newcommand{\CE}{\mathcal{E}}
\newcommand{\CI}{\mathcal{I}}
\newcommand{\CP}{\mathcal{P}}
\newcommand{\CQ}{\mathcal{Q}}
\newcommand{\CS}{\mathcal{S}}
\newcommand{\Extc}{\Ext_\mathcal{E}^1}
\newcommand{\Homc}{\Hom_\mathcal{C}}
\newcommand{\Homco}{\overline\Hom_\mathcal{C}}
\newcommand{\Homcu}{\underline\Hom_\mathcal{C}}
\newcommand{\op}{\mathrm{op}}
\newcommand{\set}[1]{\left\{#1\right\}}
\title{The generalized Auslander-Reiten duality on an exact category}
\author{Pengjie Jiao}
\address{School of Mathematical Sciences, University of Science and Technology of China, Hefei 230026, PR China}
\email{jiaopjie@mail.ustc.edu.cn}
\date{\today}
\subjclass[2010]{16G70, 16G20, 18E10}
\keywords{almost split conflation, Auslander-Reiten duality, stable category, representations of infinite quivers}
\begin{document}

\begin{abstract}
  We introduce a notion of generalized Auslander-Reiten duality on a Hom-finite Krull-Schmidt exact category $\mathcal{C}$. This duality induces the generalized Auslander-Reiten translation functors $\tau$ and $\tau^-$. They are mutually quasi-inverse equivalences between the stable categories of two full subcategories $\mathcal{C}_r$ and $\mathcal{C}_l$ of $\mathcal{C}$. A non-projective indecomposable object lies in the domain of $\tau$ if and only if it appears as the third term of an almost split conflation; dually, a non-injective indecomposable object lies in the domain of $\tau^-$ if and only if it appears as the first term of an almost split conflation. We study the generalized Auslander-Reiten duality on the category of finitely presented representations of locally finite interval-finite quivers.
\end{abstract}

\maketitle

\section{Introduction}

Throughout, $k$ denotes a commutative artinian ring.
We consider $k$-linear skeletally small exact categories.

Recall that an abelian category $\mathcal{A}$ is said to \emph{have enough almost split sequences} provided that each non-projective indecomposable object appears as the third term of an almost split sequence, and that each non-injective indecomposable object appears as the first term of an almost split sequence.

H.~Lenzing and R.~Zuazua proved that an Ext-finite abelian category $\mathcal{A}$ has enough almost split sequences if and only if it has Auslander-Reiten duality;
see \cite[Theorem~1.1]{LenzingZuazua2004Auslander}.
S.~Liu, P.~Ng and C.~Paquette \cite{LiuNgPaquette2013Almost} investigated almost split sequences in an exact category, and proved the local version of \cite[Theorem~1.1]{LenzingZuazua2004Auslander} under weaker hypotheses;
see \cite[Theorem~3.6]{LiuNgPaquette2013Almost}.
One can observe that the notion of Auslander-Reiten duality applies to exact categories naturally.

Inspired by \cite{Chen2011Generalized}, we introduce the notion of \emph{generalized Auslander-Reiten duality} for a Hom-finite Krull-Schmidt exact category $\CC$. This duality induces the generalized Auslander-Reiten translation functors $\tau$ and $\tau^-$, which are defined on the stable categories of two full subcategories  $\CC_r$ and $\CC_l$ of $\CC$. Then $\CC$ has Auslander-Reiten duality in the sense of \cite{LenzingZuazua2004Auslander} if and only if $\CC_l=\CC=\CC_r$. In general, $\CC_r$ and $\CC_l$ are not equal to $\CC$.

We prove that a non-projective indecomposable object lies in $\CC_r$ if and only if it appears as the third term of an almost split conflation, and that a non-injective indecomposable object lies in $\CC_l$ if and only if it appears as the first term of an almost split conflation; see Proposition~\ref{prop:C_r}. We prove that the generalized Auslander-Reiten translation functors $\tau$ and $\tau^-$ are mutually quasi-inverse equivalences between the projectively stable category of $\CC_r$ and the injectively stable category of $\CC_l$; see Proposition~\ref{prop:quasi-inverse}.

In Section 4, we describe the full subcategories $\CC_r$ and $\CC_l$, and the generalized Auslander-Reiten translation functors $\tau$ and $\tau^-$ in the case that $\CC$ is the category of finitely presented representations of a locally finite interval-finite quiver. This description depends on the results of \cite{BautistaLiuPaquette2013Representation}.

\section{Two full subcategories}

Let $k$ be a commutative artinian ring and $\check{k}$ be the minimal injective cogenerator. Denote by $k$-mod the category of finitely generated $k$-modules and by $D=\Hom_k(-,\check{k})$ the Matlis duality.

Let $\CC$ be a $k$-linear exact category. Recall that an \emph{exact category} is an additive category $\CC$ together with a collection $\CE$ of kernel-cokernel pairs $(i,d)$ which satisfies the axioms in \cite[Appendix~A]{Keller1990Chain}; compare \cite[Section~2]{Quillen1973Higher}. Here, a kernel-cokernel pair $(i,d)$ means a sequence of morphisms $X\xrightarrow{i}Y\xrightarrow{d}Z$ satisfying that $i$ is the kernel of $d$ and $d$ is the cokernel of $i$. A kernel-cokernel pair $(i,d)$ in $\CE$ is called a \emph{conflation}, while $i$ is called an \emph{inflation} and $d$ is called a \emph{deflation}. For two objects $X$ and $Y$ in $\CC$, we denote by $\Extc(X,Y)$ the set of equivalence classes of conflations $Y\to E\to X$. Given a conflation $\mu\colon Y\to E\to X$, for each $f\colon Z\to X$, we denote by $\mu.f=\Extc(f,Y)(\mu)$ the conflation obtained by the pullback of $\mu$ along $f$. Dually, for each $g\colon Y\to Z$, we denote by $g.\mu=\Extc(X,g)(\mu)$ the conflation obtained by the pushout of $\mu$ along $g$.

Recall from \cite[Section~2]{LenzingZuazua2004Auslander} that a morphism $f\colon X\to Y$ is \emph{projectively trivial} provided that for each object $Z$, the induced map $\Extc(f,Z)\colon \Extc(Y,Z)\to \Extc(X,Z)$ is zero. We observe that $f$ is projectively trivial if and only if $f$ factors through each deflation $E\to Y$ ending at $Y$. Dually, a morphism $f$ is \emph{injectively trivial} provided that for each object $Z$, the induced map $\Extc(Z,f)\colon \Extc(Z,X)\to \Extc(Z,Y)$ is zero.

Given two objects $X$ and $Y$, we denote by $\CP(X,Y)$ the set of projectively trivial morphisms from $X$ to $Y$. Then $\CP$ forms an ideal of $\CC$. The \emph{projectively stable category} $\underline\CC$ of $\CC$ is the factor category $\CC/\CP$. Given a morphism $f\colon X\to Y$, we denote by $\underline{f}$ its image in $\underline\CC$. We denote by $\Homcu(X,Y)=\Homc(X,Y)/\CP(X,Y)$ the set of morphisms in $\underline\CC$. Given an object $Z$, we mention that $\Extc(-,Z)$ is a contravariant functor from $\underline{\CC}$ to the category of $k$-modules. We observe that an object $P$ becomes zero in $\underline\CC$ if and only if $P$ is projective in $\CC$.

Dually, we denote by $\CI(X,Y)$ the set of injectively trivial morphisms from $X$ to $Y$. The \emph{injectively stable category} $\overline\CC$ of $\CC$ is the factor category $\CC/\CI$. Given a morphism $f\colon X\to Y$, we denote by $\overline{f}$ its image in $\overline{\CC}$. We denote by $\Homco(X,Y)=\Homc(X,Y)/\CI(X,Y)$ the set of morphisms in $\overline{\CC}$. We mention that $\Extc(Z,-)$ is a functor from $\overline{\CC}$ to the category of $k$-modules. We observe that an object $I$ becomes zero in $\overline\CC$ if and only if $I$ is injective in $\CC$.

Recall that a morphism $v\colon E\to Y$ is \emph{right almost split} if it is not a retraction and each $f\colon Z\to Y$ which is not a retraction factors through $v$. Dually, a morphism $u\colon X\to E$ is \emph{left almost split} if it is not a section and each $f\colon X\to Z$ which is not a section factors through $u$. A conflation $\delta\colon X \xrightarrow{u} E \xrightarrow{v} Y$ is an \emph{almost split conflation} if $u$ is left almost split and $v$ is right almost split. Recall that an object whose endomorphism algebra is local is called \emph{strongly indecomposable}. We mention that given an almost split conflation $X\to E\to Y$, the objects $X$ and $Y$ are strongly indecomposable; see \cite[Proposition~II.4.4]{Auslander1978Functors}. We refer to \cite[Chapter~V]{AuslanderReitenSmalo1995Representation} for the general properties of almost split conflations.

We observe the fact that for each non-split conflation $\mu\colon X\to Y\to Z$, there exists some $\gamma\in D\Extc(Z,X)$ such that $\gamma(\mu)\neq 0$. The following lemma is essentially due to
\cite[Proposition~3.1]{LenzingZuazua2004Auslander};
compare \cite[Theorem~9.3 and Corollary~9.4]{GabrielRoiter1992Representations}
and \cite[Proposition~3.1]{LiuNgPaquette2013Almost}.

\begin{lem}\label{lem:representable}
  Let $\delta\colon X\to E\to Y$ be an almost split conflation and let $\gamma\in D\Extc(Y,X)$ with $\gamma(\delta)\neq 0$. Then we have the following statements.
  \begin{enumerate}
    \item For each $M$, we have a non-degenerate $k$-bilinear map
      \[\langle-,-\rangle_M\colon\Homco(M,X)\times\Extc(Y,M)\longrightarrow\check{k}, \quad(\overline f,\mu)\mapsto\gamma(f.\mu).\]
      If moreover $\Homco(M,X)\in k\textnormal{-mod}$ for each $M$, then the induced map
      \[\phi_{Y,M}\colon\Homco(M,X)\longrightarrow D\Extc(Y,M), \quad\overline{f}\mapsto\langle\overline{f},-\rangle_M,\]
      is an isomorphism and natural in $M$ with $\gamma=\phi_{Y,X}(\overline{\Id_X})$.
    \item For each $M$, we have a non-degenerate $k$-bilinear map
      \[\langle-,-\rangle_M\colon\Extc(M,X)\times\Homcu(Y,M)\longrightarrow\check{k},\quad(\mu,\underline g)\mapsto\gamma(\mu.g).\]
      If moreover $\Homcu(Y,M)\in k\textnormal{-mod}$ for each $M$, then the induced map
      \[\psi_{X,M}\colon\Homcu(Y,M)\longrightarrow D\Extc(M,X), \quad\underline{g}\mapsto\langle-,\underline{g}\rangle_M,\]
      is an isomorphism and natural in $M$ with $\gamma=\psi_{X,Y}(\underline{\Id_Y})$.
  \end{enumerate}
\end{lem}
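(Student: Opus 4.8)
My plan is to prove statement (1) in full and obtain (2) by a dual argument. Throughout I would use the standard dictionary between morphisms of conflations and the pullback/pushout operations: a morphism of conflations which is the identity on sub-objects exhibits its source as the pullback of its target along the map induced on quotients, while one which is the identity on quotients exhibits its target as the pushout of its source along the map induced on sub-objects. First I would record well-definedness and bilinearity of $(\overline f,\mu)\mapsto\gamma(f.\mu)$. Bilinearity is immediate from the biadditivity of $\Extc$ and the $k$-linearity of $\gamma$. For well-definedness on $\Homco(M,X)$, if $f\in\CI(M,X)$ is injectively trivial then $\Extc(Y,f)=0$, so $f.\mu=\Extc(Y,f)(\mu)=0$ and $\gamma(f.\mu)=0$; hence the pairing descends to $\Homco(M,X)=\Homc(M,X)/\CI(M,X)$.

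Next I would establish non-degeneracy in the conflation variable. Let $\mu\colon M\xrightarrow{i}N\xrightarrow{p}Y$ represent a non-split class in $\Extc(Y,M)$, so $p$ is not a retraction. Since $v$ is right almost split, $p=vh$ for some $h\colon N\to E$; then $v(hi)=pi=0$ forces $hi$ to factor through $u=\Ker v$, giving $f\colon M\to X$ with $uf=hi$. The triple $(f,h,\Id_Y)$ is a morphism of conflations $\mu\to\delta$ that is the identity on quotients, whence $f.\mu=\delta$ and $\langle\overline f,\mu\rangle_M=\gamma(\delta)\neq0$. Thus $\mu\mapsto\langle-,\mu\rangle_M$ is injective.

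The step I expect to be the main obstacle is non-degeneracy in the morphism variable, since the hypothesis only tests $f$ against conflations with quotient exactly $Y$, whereas injective triviality concerns conflations with arbitrary quotient. Suppose $\gamma(f.\mu)=0$ for every $\mu\in\Extc(Y,M)$; I claim $f$ is injectively trivial. If not, there is $\nu\in\Extc(Z,M)$ with $f.\nu\neq0$ for some $Z$; write $f.\nu\colon X\xrightarrow{j}N'\xrightarrow{q}Z$ with $j$ not a section. As $u$ is left almost split, $j=hu$ for some $h\colon E\to N'$, and then $qh$ kills $u$, so $qh=ev$ with $e\colon Y\to Z$. The triple $(\Id_X,h,e)$ is a morphism $\delta\to f.\nu$ that is the identity on sub-objects, so $\delta=(f.\nu).e=f.(\nu.e)$, where the last equality is the commutation of the pushout along $f$ with the pullback along $e$ (bifunctoriality of $\Extc$). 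But $\nu.e\in\Extc(Y,M)$, so $\gamma(\delta)=\gamma(f.(\nu.e))=0$, contradicting $\gamma(\delta)\neq0$. Hence $f.\nu=0$ for all $\nu$, i.e.\ $\overline f=0$, and $\phi_{Y,M}$ is injective.

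Finally I would upgrade non-degeneracy to the isomorphism and settle the bookkeeping. Assuming $\Homco(M,X)\in k\textnormal{-mod}$, the two non-degeneracy statements give injections $\phi_{Y,M}\colon\Homco(M,X)\hookrightarrow D\Extc(Y,M)$ and $\Extc(Y,M)\hookrightarrow D\Homco(M,X)$. Since $\Homco(M,X)$ has finite length and $D$ preserves length, the second injection makes $\Extc(Y,M)$ of finite length $\le\ell(\Homco(M,X))$, whence $\ell(D\Extc(Y,M))\le\ell(\Homco(M,X))$; combined with injectivity of $\phi_{Y,M}$ the lengths coincide and $\phi_{Y,M}$ is an isomorphism. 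The identity $\gamma=\phi_{Y,X}(\overline{\Id_X})$ holds because $\Id_X.\mu=\mu$, and naturality in $M$ follows from the functoriality $(f\theta).\mu'=f.(\theta.\mu')$ of the pushout. I would then prove (2) dually, interchanging the roles of $u$ and $v$, of pushouts and pullbacks, and of $\CI$ and $\CP$; the class produced in the non-degeneracy steps is again $\delta$, and $\gamma=\psi_{X,Y}(\underline{\Id_Y})$ since $\mu.\Id_Y=\mu$.
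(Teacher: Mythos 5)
Your proof is correct and follows essentially the same route as the paper: both non-degeneracy arguments rest on exactly the same use of the almost split property (factoring a non-retraction deflation through $v$ to realize $\delta$ as a pushout, and dually factoring a non-section inflation through $u$ to realize $\delta$ as a pullback, together with $(f.\mu).h=f.(\mu.h)$). The extra details you supply --- well-definedness on $\Homco(M,X)$, the length argument upgrading non-degeneracy to an isomorphism, and the naturality check --- are exactly the steps the paper leaves as routine.
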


\begin{proof}
  (1) It is sufficient to show that the $k$-bilinear map $\langle-,-\rangle_M$ is non-degenerate for each $M$. The naturalness of $\phi_Y\colon\Homco(-,X)\to D\Extc(Y,-)$ follows from a direct verification.

  On the one hand, assume that $\mu\colon M\to E'\to Y$ is a non-split conflation. Since $\delta$ is almost split, we obtain the following commutative diagram
  \[\xymatrix{
    \llap{$\mu \colon$ }M\ar[r]\ar@{-->}[d]_-f &E'\ar[r]\ar@{-->}[d] &Y\ar@{=}[d]\\
    \llap{$\delta\colon$ }X\ar[r]          &E \ar[r]       &Y.
  }\]
  Here, the left square is a pushout diagram. Hence $f.\mu=\delta$ and $\langle \overline f,\mu\rangle_M=\gamma(\delta)\ne 0$.

  On the other hand, let $\overline f\colon M\to X$ be a nonzero morphism in $\overline\CC$. We have that $f$ is not injectively trivial in $\CC$. Hence there exists some object $N$ and some $\mu\in\Extc(N,M)$ such that $f.\mu=\Extc(N,f)(\mu)$ is non-split. Since $\delta$ is almost split, it can be obtained by a pullback of $f.\mu$ along some $h\colon Y\to N$.
  \[\xymatrix{
    \llap{$  \mu\colon$ }M\ar[r]\ar[d]_-f   &E_1\ar[r]\ar[d] &N\ar@{=}[d]\\
    \llap{$f.\mu\colon$ }X\ar[r]            &E_2\ar[r]       &N          \\
    \llap{$ \delta\colon$ }X\ar[r]\ar@{=}[u]  &E\ar[r]\ar@{-->}[u]   &Y\ar@{-->}[u]_-h
  }\]
  Hence we have $\delta=(f.\mu).h=f.(\mu.h)$ and then $\langle\overline f,\mu.h\rangle_M=\gamma(\delta)\ne 0$.

  (2) The proof is similar.
\end{proof}

The following lemma is a slight modification of
\cite[Proposition~4.1]{LenzingZuazua2004Auslander};
compare \cite[Theorem~3.6]{LiuNgPaquette2013Almost}.

\begin{lem}\label{lem:ass}
  Let $Y$ be a strongly indecomposable object in $\CC$.
  \begin{enumerate}
    \item Assume $D\Extc(Y,-) \simeq \Homco(-,Y')$ for some $Y'$. If $\Homco(Z,Y') \in k \textnormal{-mod}$ for each $Z$, and $Y'$ has a non-injective strongly indecomposable direct summand, then there exists an almost split conflation ending at $Y$.
    \item Assume $D\Extc(-,Y) \simeq \Homcu(Y',-)$ for some $Y'$. If $\Homcu(Y',Z) \in k \textnormal{-mod}$ for each $Z$, and $Y'$ has a non-projective strongly indecomposable direct summand, then there exists an almost split conflation starting at $Y$.
  \end{enumerate}
\end{lem}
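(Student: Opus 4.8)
The plan is to prove (1) directly and obtain (2) by a dual argument. Fix a natural isomorphism $\eta\colon\Homco(-,Y')\xrightarrow{\ \sim\ }D\Extc(Y,-)$ of contravariant functors on $\overline\CC$, and let $\gamma=\eta_{Y'}(\overline{\Id_{Y'}})\in D\Extc(Y,Y')$ be the element corresponding to $\eta$ under the Yoneda lemma. Naturality then reads $\eta_M(\overline h)(\mu)=\gamma(h.\mu)$ for all $\overline h\in\Homco(M,Y')$ and all $\mu\in\Extc(Y,M)$. Since $\Homco(M,Y')\in k\textnormal{-mod}$ for each $M$ and $D$ is a duality on finite length modules, the isomorphism $\eta_M$ upgrades to a non-degenerate $k$-bilinear pairing $\Homco(M,Y')\times\Extc(Y,M)\to\check{k}$, $(\overline h,\mu)\mapsto\gamma(h.\mu)$, with both sides of finite length; in particular $\Extc(Y,M)\cong D\Homco(M,Y')$.

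Next I would fix a non-injective strongly indecomposable direct summand $X$ of $Y'$, with section $\iota\colon X\to Y'$ and retraction $p$ satisfying $p\iota=\Id_X$. The almost split conflation will have first term $X$, and its class $\delta\in\Extc(Y,X)$ is produced by linear algebra. Let $R\subseteq\Homco(X,Y')$ be the image of the radical $\operatorname{rad}(X,Y')$, a proper $k$-submodule: indeed $\overline\iota\notin R$, for if $\iota=\rho+j$ with $\rho\in\operatorname{rad}(X,Y')$ and $j$ injectively trivial, then $\Id_X-p\rho=pj$ would be simultaneously a unit of the local ring $\End(X)$ and injectively trivial, forcing $X$ to be injective, a contradiction. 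Since $\Homco(X,Y')/R\ne0$ has finite length and $\check{k}$ is an injective cogenerator, there is a nonzero functional on $\Homco(X,Y')$ vanishing on $R$; transporting it through the isomorphism $\Extc(Y,X)\cong D\Homco(X,Y')$ yields a nonzero class $\delta\colon X\xrightarrow{u}E\xrightarrow{v}Y$ with $\gamma(\rho.\delta)=0$ for every $\rho\in\operatorname{rad}(X,Y')$.

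It then remains to verify that $\delta$ is almost split, which I would do by showing that $u$ is left almost split; since $X$ is strongly indecomposable and $\delta$ is non-split, the standard criterion of \cite[Chapter~V]{AuslanderReitenSmalo1995Representation} upgrades this to an almost split conflation ending at $Y$. Concretely, for a non-section (equivalently radical) morphism $f\colon X\to N$ I must show $f$ factors through $u$, that is $f.\delta=0$ in $\Extc(Y,N)$. Applying the non-degenerate pairing at $N$ reduces this to $\gamma(h.(f.\delta))=0$ for all $h\colon N\to Y'$, and $h.(f.\delta)=(hf).\delta$ with $hf\in\operatorname{rad}(X,Y')$, so the required vanishing is precisely the defining property of $\delta$. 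The crux of the argument is exactly this reduction: it converts the left-almost-split condition---a priori a factorization requirement ranging over all objects $N$---into the single finite-dimensional linear condition that a functional vanish on $R\subseteq\Homco(X,Y')$, the main obstacle being the verification that $R$ is proper, where the non-injectivity of $X$ and the locality of $\End(X)$ are both essential. Part (2) follows by the dual construction, using the isomorphism $D\Extc(-,Y)\simeq\Homcu(Y',-)$ and the pairing $\Extc(M,X)\times\Homcu(Y,M)\to\check{k}$ of Lemma~\ref{lem:representable}(2) to produce an almost split conflation starting at $Y$.
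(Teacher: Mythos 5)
Your argument is correct and follows essentially the same route as the paper: the same Yoneda element $\gamma$ and non-degenerate pairings, the same choice of a functional vanishing on the non-sections (your $\operatorname{rad}(X,Y')$ coincides with the paper's submodule $H$ of non-sections because $X$ is strongly indecomposable, and your properness argument via $p\iota=\Id_X$ is the paper's observation that $\CI(X,Y')\subseteq H$ while $\iota\notin H$), and the same reduction of left-almost-splitness of $u$ to that vanishing. The one slip is in the final upgrade: the standard criterion that promotes a left almost split inflation to an almost split conflation requires the \emph{third} term $Y$ to be strongly indecomposable (which the lemma assumes and the paper invokes via \cite[Proposition~II.4.4]{Auslander1978Functors}), not the first term $X$ --- locality of $\End(X)$ is automatic once $u$ is left almost split and by itself does not suffice.
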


\begin{proof}
  (1) Let $\phi\colon\Homco(-,Y')\to D\Extc(Y,-)$ be an isomorphism of functors. Set $\gamma=\phi_{Y'} (\overline{\Id_{Y'}})$. By the naturalness of $\phi$, for each object $M$ and each morphism $f\colon M\to Y'$, we have
  \[\phi_M(\overline f)=D\Extc(Y,f)(\gamma) =\gamma\circ\Extc(Y,f).\]
  It follows that $\phi_M(\overline f)(\mu)=\gamma(f.\mu)$ for each $\mu\in\Extc(Y,M)$.

  Let $X$ be a non-injective strongly indecomposable direct summand of $Y'$. Then the isomorphism $\phi_X$ induces a non-degenerate $k$-bilinear map
  \[\langle-,-\rangle_X\colon\Homco(X,Y')\times\Extc(Y,X)\longrightarrow\check k, \quad (\overline f,\mu)\mapsto\gamma(f.\mu).\]
  Let $H\subseteq\Homc(X,Y')$ be the subset formed by non-sections. Observe that $H$ is a $k$-submodule since $X$ is strongly indecomposable. We have that $\CI(X,Y')\subseteq H$ since $X$ is non-injective. Hence $H/\CI(X,Y')$ is a proper $k$-submodule of $\Homco(X,Y')$. Then there exists some nonzero $\alpha\in D\Homco(X,Y')$ such that $\alpha(\overline f)=0$ for each $f\in H$. By the non-degenerate bilinear form $\langle-,-\rangle_X$, there exists some non-split conflation $\delta\colon X\xrightarrow{u} E\to Y$ such that
  $\alpha=\langle-,\delta\rangle_X$. Then we have
  $\langle\overline f,\delta\rangle_X=0$ for each $f\in H$.

  We claim that $u$ is left almost split. Indeed, we observe that $u$ is not a section. Assume that $h\colon X\to M$ is not a section. Then for each $g\colon M\to Y'$, the morphism $g\circ h$ is not a section and thus lies in $H$. Hence we have $\langle\overline{g\circ h},\delta\rangle_X = 0$. Consider the non-degenerate $k$-bilinear map
  \[\langle-,-\rangle_M\colon\Homco(M,Y')\times\Extc(Y,M)\longrightarrow\check k, \quad (\overline f,\mu)\mapsto\gamma(f.\mu),\]
  induced by $\phi_M$. For each $g\colon M\to Y'$, we have
  \[\langle\overline g,h.\delta\rangle_M = \gamma(g.(h.\delta)) = \langle\overline{g\circ h}, \delta\rangle_X=0.\]
  This implies that the conflation $h.\delta$ splits. In other words, the morphism $h$ factors through $u$. Therefore the morphism $u$ is left almost split. It follows from \cite[Proposition~II.4.4]{Auslander1978Functors} that $\delta$ is an almost split conflation since $\End_\CC(Y)$ is local.

  (2) The proof is similar.
\end{proof}

From now on, we assume that the $k$-linear exact category is Hom-finite and Krull-Schmidt. Here, the \emph{Hom-finiteness} means that all the Hom $k$-modules are finitely generated. In this case all indecomposable objects are strongly indecomposable. Moreover, the stable categories $\underline\CC$ and $\overline\CC$ are Krull-Schmidt and thus have split idempotents.

We introduce two full subcategories of $\CC$ as follows
\[\CC_r=\set{X\in\CC\middle|\mbox{the functor $D\Extc(X,-)\colon\overline\CC\to k$-mod is representable}}\]
and
\[\CC_l=\set{X\in\CC\middle|\mbox{the functor $D\Extc(-,X)\colon\underline\CC\to k$-mod is representable}}.\]
Here, both $\CC_r$ and $\CC_l$ are additive subcategories which are closed under direct summands.
Indeed, it follows from Yoneda's lemma and the fact that $\underline\CC$ and $\overline\CC$ have split idempotents.

\begin{lem}\label{lem:iso-close}
  Let $X$ and $Y$ be two objects in $\CC$.
  \begin{enumerate}
    \item Assume that $X\in\CC_r$ and $X\simeq Y$ in $\underline\CC$. Then the object $Y$ lies in $\CC_r$.
    \item Assume that $X\in\CC_l$ and $X\simeq Y$ in $\overline\CC$. Then the object $Y$ lies in $\CC_l$.
  \end{enumerate}
\end{lem}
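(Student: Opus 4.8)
The plan is to show that each of the two defining functors depends, up to natural isomorphism, only on the isomorphism class of its argument in the appropriate stable category, and then to invoke the elementary fact that representability of a functor is preserved under natural isomorphism. For part~(1), I would begin from the observation recorded earlier that for each fixed object $Z$ the assignment $\Extc(-,Z)$ is a contravariant functor on the projectively stable category $\underline\CC$. Fix mutually inverse isomorphisms $\underline a\colon X\to Y$ and $\underline b\colon Y\to X$ in $\underline\CC$, and choose lifts $a,b$ in $\CC$. Applying $\Extc(-,Z)$ produces $k$-linear maps $\Extc(a,Z)\colon\Extc(Y,Z)\to\Extc(X,Z)$ and $\Extc(b,Z)\colon\Extc(X,Z)\to\Extc(Y,Z)$, each depending only on the stable class of the argument; functoriality together with $\underline b\circ\underline a=\underline{\Id_X}$ and $\underline a\circ\underline b=\underline{\Id_Y}$ then forces these two maps to be mutually inverse. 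The next step is to assemble the family $\set{\Extc(a,Z)}_Z$ into a natural isomorphism $\Extc(Y,-)\xrightarrow{\sim}\Extc(X,-)$: by bifunctoriality of $\Extc$ this family is natural in the second variable, and since $\Extc(Z,-)$ factors through $\overline\CC$ the naturality descends to $\overline\CC$.

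Applying the Matlis duality $D$ to this natural isomorphism yields a natural isomorphism $D\Extc(X,-)\simeq D\Extc(Y,-)$ of functors $\overline\CC\to k$-mod. Since $X\in\CC_r$, the left-hand functor is representable, hence so is the right-hand functor, and therefore $Y\in\CC_r$. Part~(2) is carried out in the same way after exchanging the roles of the two variables: there one uses that $\Extc(Z,-)$ is a functor on $\overline\CC$ for each $Z$, lifts an isomorphism $X\simeq Y$ in $\overline\CC$, and checks naturality in the first variable over $\underline\CC$ to obtain $D\Extc(-,X)\simeq D\Extc(-,Y)$ as functors on $\underline\CC$, whence $Y\in\CC_l$.

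I do not expect any serious obstacle; the argument is essentially bookkeeping built on the two factorizations already asserted in the preceding paragraphs. The one point requiring a little care is to confirm that the induced isomorphism is natural in the \emph{correct} variable and that this naturality is compatible with the stable category on which the relevant functor lives --- concretely, that the components $\Extc(a,Z)$ respect morphisms of $\overline\CC$, not merely of $\CC$. This is immediate from the stated fact that $\Extc(Z,-)$ already factors through $\overline\CC$, so once the two factorizations (through $\underline\CC$ in the first variable and through $\overline\CC$ in the second) are in hand, the two halves glue together formally.
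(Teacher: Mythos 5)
Your proposal is correct and takes essentially the same route as the paper: the paper's entire proof of part (1) is the observation that $D\Extc(X,-)\simeq D\Extc(Y,-)$ as functors on $\overline\CC$, which is precisely the natural isomorphism you construct (in more detail) from the stable isomorphism $X\simeq Y$ in $\underline\CC$ before invoking invariance of representability. The extra care you take about naturality in the second variable descending to $\overline\CC$ is exactly the bookkeeping the paper leaves implicit.
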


\begin{proof}
  (1) We observe that $D\Extc(X,-)\simeq D\Extc(Y,-)$ as functors. Then the result follows. The proof of (2) is similar.
\end{proof}

As a consequence of Lemmas~\ref{lem:representable} and \ref{lem:ass}, we have the following description of indecomposable objects in $\CC_r$ and $\CC_l$.

\begin{prop}\label{prop:C_r}
  Let $Y$ be an indecomposable object in $\CC$.
  \begin{enumerate}
    \item Assume that $Y$ is non-projective. Then $Y\in\CC_r$ if and only if there exists an almost split conflation ending at $Y$.
    \item Assume that $Y$ is non-injective. Then $Y\in\CC_l$ if and only if there exists an almost split conflation starting at $Y$. \qed
  \end{enumerate}
\end{prop}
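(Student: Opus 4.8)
The plan is to prove Proposition~\ref{prop:C_r}(1); statement (2) follows by a dual argument. The key is to combine the two lemmas already established, which are essentially converse to one another. For the two directions I would argue as follows.

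\textbf{The direction $Y\in\CC_r\Rightarrow$ existence of an almost split conflation.} Suppose $Y$ is non-projective indecomposable and $Y\in\CC_r$, so that $D\Extc(Y,-)\simeq\Homco(-,Y')$ for some object $Y'$. Since $\CC$ is Hom-finite, $\Homco(Z,Y')\in k\textnormal{-mod}$ for every $Z$, so the representability and finiteness hypotheses of Lemma~\ref{lem:ass}(1) are satisfied. The only thing left to verify is that $Y'$ has a non-injective strongly indecomposable direct summand. First I would observe that $Y'\neq 0$ in $\overline\CC$: otherwise $D\Extc(Y,-)$ would vanish, forcing $\Extc(Y,-)=0$ (as $D$ is a faithful exact duality), which means $Y$ is projective, contradicting our assumption. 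By the Krull-Schmidt property of $\overline\CC$ we may decompose $Y'$ into indecomposables in $\overline\CC$; since injective objects vanish in $\overline\CC$, at least one summand is non-injective, and in the Krull-Schmidt Hom-finite setting indecomposable means strongly indecomposable. Thus the hypotheses of Lemma~\ref{lem:ass}(1) hold, yielding an almost split conflation ending at $Y$.

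\textbf{The direction existence of an almost split conflation $\Rightarrow Y\in\CC_r$.} Suppose $\delta\colon X\to E\to Y$ is an almost split conflation. Then $Y$ is strongly indecomposable, and I choose $\gamma\in D\Extc(Y,X)$ with $\gamma(\delta)\neq 0$, which exists because $\delta$ is non-split. Lemma~\ref{lem:representable}(1) now directly applies: Hom-finiteness guarantees $\Homco(M,X)\in k\textnormal{-mod}$ for each $M$, so the lemma produces a natural isomorphism $\phi_{Y,-}\colon\Homco(-,X)\xrightarrow{\sim}D\Extc(Y,-)$. This exhibits $D\Extc(Y,-)$ as represented by $X$ in $\overline\CC$, which is exactly the condition $Y\in\CC_r$.

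\textbf{Main obstacle.} The genuinely substantive content has already been isolated into Lemmas~\ref{lem:representable} and \ref{lem:ass}, so the remaining work is bookkeeping. The one step requiring care is the forward direction's claim that $Y'$ has a non-injective strongly indecomposable direct summand: I must rule out the degenerate case where $Y'$ becomes zero or splits off only injective summands in $\overline\CC$. The clean way is the argument above---non-projectivity of $Y$ forces $\Extc(Y,-)\neq 0$, hence $Y'\neq 0$ in $\overline\CC$, and then the Krull-Schmidt structure of $\overline\CC$ together with the vanishing of injectives supplies the required non-injective indecomposable summand. I would phrase the whole proof compactly, citing the two lemmas and invoking Hom-finiteness once for each finiteness hypothesis, and then remark that (2) is proved dually.
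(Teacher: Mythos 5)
Your proof is correct and follows exactly the route the paper intends: the paper gives no written proof, stating the proposition as an immediate consequence of Lemmas~\ref{lem:representable} and \ref{lem:ass}, and your argument supplies precisely the missing bookkeeping (Hom-finiteness for the $k$-mod hypotheses, and the observation that non-projectivity of $Y$ forces $Y'$ to be nonzero in $\overline\CC$ and hence to have a non-injective indecomposable summand). No discrepancies.
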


\section{An equivalence between stable subcategories}

Let $\CC$ be a Hom-finite Krull-Schmidt exact category. For each object $Y$ in $\CC_r$, we fix some isomorphism of functors
\[\phi_Y\colon\Homco(-,\tau Y)\longrightarrow D\Extc(Y,-).\]
Then $\tau$ gives a map from the objects of $\CC_r$ to $\CC$. Dually, for each object $X$ in $\CC_l$, we fix some isomorphism of functors
\[\psi_X\colon\Homcu(\tau^-X,-)\longrightarrow D\Extc(-,X).\]
Then $\tau^-$ gives a map from the objects of $\CC_l$ to $\CC$.

\begin{lem}\label{lem:tau(iso)}
  Let $X$ and $Y$ be two objects in $\CC$.
  \begin{enumerate}
    \item If $X,Y\in\CC_r$ and $X\simeq Y$ in $\underline\CC$, then we have $\tau X\simeq\tau Y$ in $\overline\CC$.
    \item If $X,Y\in\CC_l$ and $X\simeq Y$ in $\overline\CC$, then we have $\tau^-X\simeq\tau^-Y$ in $\underline\CC$.
  \end{enumerate}
\end{lem}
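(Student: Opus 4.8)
The plan is to reduce the claim to Yoneda's lemma in the injectively stable category $\overline\CC$. By construction the objects $\tau X$ and $\tau Y$ represent the functors $D\Extc(X,-)$ and $D\Extc(Y,-)$ on $\overline\CC$ via the fixed isomorphisms $\phi_X$ and $\phi_Y$. So it suffices to produce a natural isomorphism $\Homco(-,\tau X)\simeq\Homco(-,\tau Y)$ of functors on $\overline\CC$ and then to descend it to an isomorphism $\tau X\simeq\tau Y$ in $\overline\CC$. This last descent is legitimate because the Yoneda embedding $A\mapsto\Homco(-,A)$ of $\overline\CC$ into its category of presheaves is fully faithful.

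First I would transport the hypothesis $X\simeq Y$ in $\underline\CC$ into an isomorphism of the Ext-functors. Recall from Section~2 that for each object $Z$ the assignment $\Extc(-,Z)$ is a well-defined contravariant functor on $\underline\CC$, that is, it annihilates projectively trivial morphisms. Hence an isomorphism $\underline a\colon X\to Y$ in $\underline\CC$ with inverse $\underline b$ is carried to mutually inverse maps $\Extc(a,Z)$ and $\Extc(b,Z)$. Letting $Z$ vary and using the bifunctoriality of $\Extc$, these assemble into a natural isomorphism $\Extc(Y,-)\simeq\Extc(X,-)$ of functors on $\overline\CC$; applying the Matlis duality $D$ then yields a natural isomorphism $D\Extc(X,-)\simeq D\Extc(Y,-)$. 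This is precisely the observation already used in the proof of Lemma~\ref{lem:iso-close}.

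Composing the fixed isomorphisms with the one just produced gives
\[\Homco(-,\tau X)\xrightarrow{\ \phi_X\ }D\Extc(X,-)\xrightarrow{\ \sim\ }D\Extc(Y,-)\xrightarrow{\ \phi_Y^{-1}\ }\Homco(-,\tau Y),\]
a natural isomorphism of representable functors on $\overline\CC$, and Yoneda's lemma then delivers $\tau X\simeq\tau Y$ in $\overline\CC$. Statement~(2) is entirely dual, with the roles of $\underline\CC$ and $\overline\CC$ interchanged, $\Extc(Z,-)$ now being a functor on $\overline\CC$ and $\tau^-X,\tau^-Y$ representing $D\Extc(-,X)$ and $D\Extc(-,Y)$ on $\underline\CC$. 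I expect no serious obstacle: the whole content is the functoriality of $\Extc$ on the two stable categories together with Yoneda. The only point deserving care is that the isomorphism $D\Extc(X,-)\simeq D\Extc(Y,-)$ is natural in $Z$, which comes from the bifunctoriality of $\Extc$ rather than from any special feature of the chosen $\underline a$.
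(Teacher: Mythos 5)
Your argument is correct and is essentially the paper's own proof, just spelled out in more detail: the paper likewise notes that $\Homco(-,\tau X)$ and $\Homco(-,\tau Y)$ are both isomorphic to $D\Extc(X,-)\simeq D\Extc(Y,-)$ and concludes by Yoneda's lemma. Your added care about where the isomorphism $D\Extc(X,-)\simeq D\Extc(Y,-)$ comes from (functoriality of $\Extc(-,Z)$ on $\underline\CC$ and bifunctoriality in $Z$) is exactly the observation the paper leaves implicit.
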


\begin{proof}
  (1) We observe that $\Homco(-,\tau X)\simeq\Homco(-,\tau Y)$, since they are both isomorphic to $D\Extc(X,-)\simeq D\Extc(Y,-)$. Then the result follows from Yoneda's lemma. The proof of (2) is similar.
\end{proof}

\begin{lem}\label{lem:tau^-tauY=Y}
  Let $Y$ be an object in $\CC$.
  \begin{enumerate}
    \item If $Y\in\CC_r$, then $\tau Y\in\CC_l$ and $Y\simeq\tau^-\tau Y$ in $\underline\CC$.
    \item If $Y\in\CC_l$, then $\tau^-Y\in\CC_r$ and $Y\simeq\tau\tau^-Y$ in $\overline\CC$.
  \end{enumerate}
\end{lem}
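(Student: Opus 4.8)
The plan is to prove part (1); part (2) will follow by a dual argument. The goal is twofold: first, to show that $\tau Y \in \CC_l$, and second, to establish the isomorphism $Y \simeq \tau^-\tau Y$ in $\underline\CC$. The starting point is the defining isomorphism $\phi_Y\colon \Homco(-,\tau Y) \to D\Extc(Y,-)$, which exhibits $D\Extc(Y,-)$ as a representable functor on $\overline\CC$. My first step is to unpack what membership in $\CC_l$ requires for the object $\tau Y$: I must produce a representation of the functor $D\Extc(-,\tau Y)\colon \underline\CC \to k\text{-mod}$. The natural candidate for the representing object is $Y$ itself (up to stable isomorphism), which is exactly what the second assertion predicts.

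The engine for both claims should be Lemma~\ref{lem:representable}. First I would unravel the isomorphism $\phi_Y$ to extract a distinguished element $\gamma = \phi_{Y,\tau Y}(\overline{\Id_{\tau Y}}) \in D\Extc(Y,\tau Y)$, so that $\phi_{Y,M}(\overline f)(\mu) = \gamma(f.\mu)$ for all $\mu \in \Extc(Y,M)$, by naturality (this is the same computation performed at the start of the proof of Lemma~\ref{lem:ass}(1)). Since $\phi_Y$ is an isomorphism and $Y$ is indecomposable hence strongly indecomposable, I expect $\tau Y$ to have a non-injective strongly indecomposable summand, so that Lemma~\ref{lem:ass}(1) yields an almost split conflation $\delta\colon \tau Y \to E \to Y$ with $\gamma(\delta) \neq 0$. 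The key point is that $\gamma$ is precisely a functional detecting this almost split conflation. Once $\delta$ is in hand, I can feed it into both parts of Lemma~\ref{lem:representable}: part (1) recovers $\Homco(-,\tau Y) \simeq D\Extc(Y,-)$ (consistent with $\phi_Y$), while part (2) gives the isomorphism $\Homcu(Y,-) \simeq D\Extc(-,\tau Y)$ of functors $\underline\CC \to k\text{-mod}$. This latter isomorphism exhibits $D\Extc(-,\tau Y)$ as representable with representing object $Y$, which simultaneously proves $\tau Y \in \CC_l$ and identifies $\tau^-(\tau Y) \simeq Y$ in $\underline\CC$, since $\tau^-$ was defined via exactly such a representation.

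Two technical points need care. First, the Hom-finiteness hypothesis is needed to verify the side conditions $\Homco(Z,\tau Y)\in k\text{-mod}$ and $\Homcu(Y,Z)\in k\text{-mod}$ that appear in both lemmas; these hold automatically under the standing assumption introduced just before the definitions of $\CC_r$ and $\CC_l$. Second, I must confirm that $\tau Y$ genuinely has a non-injective strongly indecomposable summand, as required by Lemma~\ref{lem:ass}(1): if $\tau Y$ were injective it would vanish in $\overline\CC$, forcing $\Homco(-,\tau Y) = 0$ and hence $D\Extc(Y,-) = 0$, which would make $Y$ projective and contradict the existence of a non-split conflation ending at $Y$; since $\CC$ is Krull-Schmidt, a non-injective $\tau Y$ decomposes into indecomposable (hence strongly indecomposable) summands, at least one of which is non-injective.

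The main obstacle I anticipate is the compatibility between the two applications of Lemma~\ref{lem:representable}, namely ensuring that the \emph{same} element $\gamma$ and the \emph{same} almost split conflation $\delta$ govern both the representation $\phi_Y$ defining $\tau Y$ and the representation of $D\Extc(-,\tau Y)$ defining $\tau^-\tau Y$. Because $\phi_Y$ and $\psi_{\tau Y}$ are only \emph{fixed choices} of isomorphisms rather than canonically the same, the isomorphism $\tau^-\tau Y \simeq Y$ holds only up to the ambiguity inherent in these choices; this is exactly why the conclusion is stated in $\underline\CC$ rather than as an equality. Reconciling the chosen $\psi_{\tau Y}$ with the isomorphism produced by Lemma~\ref{lem:representable}(2) via Yoneda's lemma — both representing $D\Extc(-,\tau Y)$ and hence forcing their representing objects to be isomorphic in $\underline\CC$ — is the crux of the argument.
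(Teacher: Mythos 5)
Your proposal is correct and follows essentially the same route as the paper: reduce to $Y$ indecomposable non-projective, invoke Lemma~\ref{lem:ass}(1) to produce an almost split conflation ending at $Y$, and then use Lemma~\ref{lem:representable}(2) together with Yoneda to represent $D\Extc(-,\tau Y)$ by $Y$. The one small imprecision is that Lemma~\ref{lem:ass}(1) produces a conflation starting at a non-injective indecomposable summand $X$ of $\tau Y$ rather than at $\tau Y$ itself, so one needs the extra (easy) step that $X\simeq\tau Y$ in $\overline\CC$ and that $\CC_l$ is closed under such stable isomorphisms (Lemmas~\ref{lem:iso-close} and \ref{lem:tau(iso)}), which is exactly the reconciliation you flag at the end.
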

\begin{proof}
  We only prove (1). We may assume that $Y$ is indecomposable and non-projective. By Lemma~\ref{lem:ass}(1), there exists some almost split conflation $X\to E\to Y$. Then we have $\Homco(-,X)\simeq D\Extc(Y,-)$ and $\Homcu(Y,-)\simeq D\Extc(-,X)$ by Lemma~\ref{lem:representable}. We then obtain $X\in\CC_l$. It follows from Yoneda's lemma that $\tau Y\simeq X$ in $\overline\CC$, and $\tau^-X\simeq Y$ in $\underline\CC$. Hence we have $\tau Y\in\CC_l$ by Lemma~\ref{lem:iso-close}(2). Then we have $\tau^-\tau Y\simeq\tau^-X\simeq Y$ in $\underline\CC$. Here, the first isomorphism follows from Lemma~\ref{lem:tau(iso)}(2).
\end{proof}

We denote by $\underline{\CC_r}$ the image of $\CC_r$ under the canonical functor $\CC\to\underline\CC$, and by $\overline{\CC_l}$ the image of $\CC_l$ under the canonical functor $\CC\to\overline\CC$. We will make $\tau$ into a functor from $\underline{\CC_r}$ to $\overline{\CC_l}$, and $\tau^-$ into a functor from $\overline{\CC_l}$ to $\underline{\CC_r}$.

For each morphism $f\colon Y\to Y'$ in $\CC_r$, define the morphism $\tau(f)\colon\tau Y\to \tau Y'$ in $\overline{\CC_l}$ such that the following diagram commutes
\[\xymatrix{
  \Homco(-,\tau Y)\ar[r]^-{\phi_Y}\ar[d]_-{\Homco(-,\tau(f))}
    &D\Extc(Y ,-)\ar[d]^-{D\Extc(f,-)}\\
  \Homco(-,\tau Y')\ar[r]^-{\phi_{Y'}}
    &D\Extc(Y',-).
}\]
Here, the existence and uniqueness of $\tau(f)$ are guaranteed by Yoneda's lemma. Then it follows that $\tau$ is a functor from $\CC_r$ to $\overline{\CC_l}$. Moreover, if $f$ is projectively trivial, then $D\Extc(f,-)=0$ and thus $\tau(f)=0$ in $\overline{\CC_l}$. Hence $\tau$ induces a functor from  $\underline{\CC_r}$ to $\overline{\CC_l}$ which we still denote by $\tau$.

Similarly, we have a functor $\tau^-\colon\overline{\CC_l}\to\underline{\CC_r}$. For each $\overline g\colon X\to X'$ in $\overline{\CC_l}$, the morphism $\tau^-(\overline g)\colon \tau^-X\to\tau^-X'$ is given by the following commutative diagram
\[\xymatrix{
  \Homcu(\tau^-X',-)\ar[r]^-{\psi_{X'}}\ar[d]_-{\Homcu(\tau^-(\overline g),-)}
    &D\Extc(-,X')\ar[d]^-{D\Extc(-,g)}\\
  \Homcu(\tau^-X,-)\ar[r]^-{\psi_X}
    &D\Extc(-,X).
}\]

We will show that the functors $\tau$ and $\tau^-$ are mutually quasi-inverse equivalences between $\underline{\CC_r}$ and $\overline{\CC_l}$.

For each object $Y\in\underline{\CC_r}$, we set
\[
  \underline{\epsilon_Y} =
  (\psi_{\tau Y,Y}^{-1} \circ \phi_{Y,\tau Y})
  (\overline{\Id_{\tau Y}})
  \colon \tau^-\tau Y \longrightarrow Y
\]
in $\underline{\CC_r}$.
For each object $X\in\overline{\CC_l}$, we set
\[
  \overline{\eta_X} =
  (\phi_{\tau^-X,X}^{-1} \circ \psi_{X,\tau^-X})
  (\underline{\Id_{\tau^-X}})
  \colon X\longrightarrow \tau\tau^-X
\]
in $\overline{\CC_l}$.

\begin{lem}\label{lem:naturalness}
  Let $\underline{\epsilon_Y}$ and $\overline{\eta_X}$ be as above.
  \begin{enumerate}
    \item The morphisms $\underline{\epsilon_Y}$ yield a natural transformation $\underline{\epsilon} \colon \tau^- \tau \to \Id_{\underline{\CC_r}}$, and for each morphism $\underline f\colon Y\to Y'$ in $\underline{\CC_r}$ we have
        \[
          \tau(\underline f) =
          (\phi_{Y',\tau Y}^{-1} \circ \psi_{\tau Y,Y'})
          (\underline f \circ \underline{\epsilon_Y}).
        \]
    \item The morphisms $\overline{\eta_X}$ yield a natural transformation $\overline{\eta} \colon \Id_{\overline{\CC_l}} \to \tau \tau^-$, and for each morphism $\overline g\colon X\to X'$ in $\overline{\CC_l}$ we have
        \[
          \tau^-(\overline g) =
          (\psi_{X,\tau^-X'}^{-1} \circ \phi_{\tau^-X',X})
          (\overline{\eta_{X'}} \circ \overline g).
        \]
  \end{enumerate}
\end{lem}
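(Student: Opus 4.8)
The plan is to establish part (1) in detail and obtain part (2) by the dual argument, interchanging the roles of $\phi$ and $\psi$, of pullback and pushout, and of $\underline{\CC_r}$ and $\overline{\CC_l}$. Throughout I abbreviate $\gamma_Y=\phi_{Y,\tau Y}(\overline{\Id_{\tau Y}})\in D\Extc(Y,\tau Y)$, so that the definition of $\underline{\epsilon_Y}$ reads simply $\psi_{\tau Y,Y}(\underline{\epsilon_Y})=\gamma_Y$, and similarly $\psi_{\tau Y',Y'}(\underline{\epsilon_{Y'}})=\gamma_{Y'}$. Everything is then reduced, via the Yoneda lemma, to evaluating the defining squares of $\tau$ and $\tau^-$ at identity morphisms.

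The crux is the single identity
\[
  D\Extc(f,\tau Y)(\gamma_Y)=\phi_{Y',\tau Y}(\overline{\tau(f)})=D\Extc(Y',\tau f)(\gamma_{Y'}).
\]
First I would obtain the left-hand equality by evaluating the defining square of $\tau(f)$ at the object $\tau Y$ and applying it to $\overline{\Id_{\tau Y}}$, using $\Homco(\tau Y,\tau(f))(\overline{\Id_{\tau Y}})=\overline{\tau(f)}$. For the right-hand equality I would instead invoke the naturality of the fixed isomorphism $\phi_{Y'}\colon\Homco(-,\tau Y')\to D\Extc(Y',-)$ at the morphism $\tau f\colon\tau Y\to\tau Y'$, evaluated at $\overline{\Id_{\tau Y'}}$. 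Thus both $D\Extc(f,\tau Y)(\gamma_Y)$ and $D\Extc(Y',\tau f)(\gamma_{Y'})$ are merely two expressions for $\phi_{Y',\tau Y}(\overline{\tau f})$.

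I would then transport the two sides of the naturality square through the isomorphism $\psi_{\tau Y,Y'}\colon\Homcu(\tau^-\tau Y,Y')\to D\Extc(Y',\tau Y)$. Writing $\underline f\circ\underline{\epsilon_Y}=\Homcu(\tau^-\tau Y,f)(\underline{\epsilon_Y})$ and applying the naturality of $\psi_{\tau Y}$ together with $\psi_{\tau Y,Y}(\underline{\epsilon_Y})=\gamma_Y$ gives $\psi_{\tau Y,Y'}(\underline f\circ\underline{\epsilon_Y})=D\Extc(f,\tau Y)(\gamma_Y)$; combined with the left-hand equality above, this already yields the displayed formula for $\tau(\underline f)$. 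On the other side, evaluating the defining square of $\tau^-(\tau f)$ at the object $Y'$ and applying it to $\underline{\epsilon_{Y'}}$, and using $\psi_{\tau Y',Y'}(\underline{\epsilon_{Y'}})=\gamma_{Y'}$, gives $\psi_{\tau Y,Y'}(\underline{\epsilon_{Y'}}\circ\tau^-\tau(\underline f))=D\Extc(Y',\tau f)(\gamma_{Y'})$. The crux identity now forces these two images under $\psi_{\tau Y,Y'}$ to agree, and since $\psi_{\tau Y,Y'}$ is an isomorphism I conclude $\underline f\circ\underline{\epsilon_Y}=\underline{\epsilon_{Y'}}\circ\tau^-\tau(\underline f)$, which is the naturality of $\underline\epsilon$.

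I expect the only real obstacle to be bookkeeping: correctly tracking the two variances, so that $\Extc(f,\tau Y)$ is computed by pullback in the first variable while $\Extc(Y',\tau f)$ is computed by pushout in the second. The role of $\underline{\epsilon_Y}$ is precisely to bridge the functors represented via $\phi$ and via $\psi$, the relation $\psi_{\tau Y,Y}(\underline{\epsilon_Y})=\gamma_Y$ being what allows the $\psi$-side of each computation to be rewritten in terms of the class $\gamma_Y$ arising from $\phi$. The one genuinely substantive input beyond unwinding definitions is the naturality of $\phi_{Y'}$ used for the right-hand equality of the crux identity; without it the two sides of the naturality square could not be matched.
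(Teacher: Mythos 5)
Your proposal is correct and follows essentially the same route as the paper: the four identities you use (the defining square of $\tau(\underline f)$, the naturality of $\psi_{\tau Y}$, the naturality of $\phi_{Y'}$, and the defining square of $\tau^-\tau(\underline f)$) are exactly the four commuting squares in the paper's two diagrams, evaluated at the same elements $\overline{\Id_{\tau Y}}$, $\underline{\epsilon_Y}$, $\overline{\Id_{\tau Y'}}$ and $\underline{\epsilon_{Y'}}$. The only difference is presentational: you chase elements through your ``crux identity'' where the paper chases the diagrams directly.
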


\begin{proof}
  (1) For each $\underline f\colon Y\to Y'$ in $\underline{\CC_r}$, we have the following commutative diagram
  \[\xymatrix@+1.5em{
    \Homco(\tau Y,\tau Y)\ar[r]^-{\phi_{Y,\tau Y}}\ar[d]_-{\Homco(\tau Y,\tau(\underline f))}
      &D\Extc(Y,\tau Y)\ar[d]_-{D\Extc(f,\tau Y)}
      &\Homcu(\tau^-\tau Y,Y)\ar[l]_-{\psi_{\tau Y,Y}}\ar[d]_-{\Homcu(\tau^-\tau Y,\underline f)}\\
    \Homco(\tau Y,\tau Y')\ar[r]^-{\phi_{Y',\tau Y}}
      &D\Extc(Y',\tau Y)
      &\Homcu(\tau^-\tau Y,Y')\ar[l]_-{\psi_{\tau Y,Y'}}.
  }\]
  The left square commutes by the definition of $\tau(\underline f)$, and the right square commutes since the isomorphism $\psi_{\tau Y}$ is natural. By a diagram chasing, we obtain
  \[
    \tau(\underline f) =
    (\phi_{Y',\tau Y}^{-1} \circ \psi_{\tau Y,Y'})
    (\underline f\circ\underline{\epsilon_Y}).
  \]

  We have the following commutative diagram
  \[\xymatrix@+1.5em{
    \Homco(\tau Y',\tau Y')\ar[r]^-{\phi_{Y',\tau Y'}}\ar[d]_-{\Homco(\tau(\underline f),\tau Y')}
      &D\Extc(Y',\tau Y')\ar[d]_-{D\Extc(Y',\tau(\underline f))}
      &\Homcu(\tau^-\tau Y',Y')\ar[l]_-{\psi_{\tau Y',Y'}}\ar[d]_-{\Homcu(\tau^-\tau(\underline f),Y')}\\
    \Homco(\tau Y,\tau Y')\ar[r]^-{\phi_{Y',\tau Y}}
      &D\Extc(Y',\tau Y)
      &\Homcu(\tau^-\tau Y,Y')\ar[l]_-{\psi_{\tau Y,Y'}}.
  }\]
  The right square commutes by the definition of $\tau^-\tau(\underline f)$. By a diagram chasing, we obtain
  \[
    \tau(\underline f) =
    (\phi_{Y',\tau Y}^{-1} \circ \psi_{\tau Y,Y'})
    (\underline{\epsilon_{Y'}}\circ\tau^-\tau(\underline f)).
  \]
  We then obtain $\underline f\circ\underline{\epsilon_Y}=\underline{\epsilon_{Y'}}\circ\tau^-\tau(\underline f)$. It follows that $\underline\epsilon$ is a natural transformation.

  (2) The proof is similar.
\end{proof}

The following result strengthens \cite[Proposition~3.3]{LenzingZuazua2004Auslander}.

\begin{prop}\label{prop:quasi-inverse}
  The natural transformations $\underline\epsilon$ and $\overline\eta$ are both isomorphisms. Hence, the functors $\tau$ and $\tau^-$ are quasi-inverse to each other.
\end{prop}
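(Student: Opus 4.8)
The plan is to establish that $\underline\epsilon$ and $\overline\eta$ are isomorphisms by constructing explicit two-sided inverses, and then invoke the standard fact that a pair of adjoint-like functors with invertible unit and counit form a quasi-inverse equivalence. By Lemma~\ref{lem:naturalness} we already know $\underline\epsilon\colon\tau^-\tau\to\Id_{\underline{\CC_r}}$ and $\overline\eta\colon\Id_{\overline{\CC_l}}\to\tau\tau^-$ are natural transformations, so the only remaining task is to show each component $\underline{\epsilon_Y}$ and $\overline{\eta_X}$ is an isomorphism in the respective stable category. Since $\underline{\CC_r}$ and $\overline{\CC_l}$ are Krull-Schmidt, it suffices to treat the indecomposable non-projective (resp.\ non-injective) case, because projective objects in $\CC_r$ and injective objects in $\CC_l$ vanish in the stable categories and the formulas behave compatibly with direct sums.

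\emph{First} I would reduce to an indecomposable non-projective $Y\in\CC_r$. By Proposition~\ref{prop:C_r}(1), there is an almost split conflation $\delta\colon X\xrightarrow{u}E\xrightarrow{v}Y$, and by Lemma~\ref{lem:tau^-tauY=Y}(1) we have $\tau Y\simeq X$ in $\overline\CC$ with $X\in\CC_l$, together with $\tau^-\tau Y\simeq Y$ in $\underline\CC$. The crux is to identify $\underline{\epsilon_Y}$ with \emph{this} isomorphism rather than merely some abstract iso. \emph{Second}, I would unwind the definition
\[
  \underline{\epsilon_Y}=(\psi_{\tau Y,Y}^{-1}\circ\phi_{Y,\tau Y})(\overline{\Id_{\tau Y}})
\]
using Lemma~\ref{lem:representable}: with $\gamma=\phi_{Y,\tau Y}(\overline{\Id_{\tau Y}})\in D\Extc(Y,\tau Y)$ the element representing $\delta$ (in the sense $\gamma(\delta)\ne0$), part~(2) of that lemma gives $\psi_{\tau Y}\colon\Homcu(Y,-)\to D\Extc(-,\tau Y)$ with $\gamma=\psi_{\tau Y,Y}(\underline{\Id_Y})$. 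Hence $\psi_{\tau Y,Y}^{-1}(\gamma)=\underline{\Id_Y}$, so $\underline{\epsilon_Y}=\underline{\Id_Y}$, which is trivially an isomorphism in $\underline{\CC_r}$.

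\emph{The main obstacle} I anticipate is verifying that $\gamma=\phi_{Y,\tau Y}(\overline{\Id_{\tau Y}})$ really is the element pairing nontrivially with $\delta$, and that the \emph{same} $\gamma$ governs both $\phi$ and $\psi$. This requires checking compatibility between the two representability isomorphisms fixed at the start of Section~3 and the bilinear pairings of Lemma~\ref{lem:representable}: concretely, that the almost split conflation $\delta$ produced via Lemma~\ref{lem:tau^-tauY=Y} can be chosen so that $\phi_Y$ and $\psi_{\tau Y}$ are both induced by it through parts~(1) and~(2) of Lemma~\ref{lem:representable} with one common $\gamma$. Once this coherence is in place, the computation $\underline{\epsilon_Y}=\underline{\Id_Y}$ falls out; the dual argument gives $\overline{\eta_X}=\overline{\Id_X}$. \emph{Finally}, naturality from Lemma~\ref{lem:naturalness} upgrades these pointwise isomorphisms to natural isomorphisms, and the relations $\underline f\circ\underline{\epsilon_Y}=\underline{\epsilon_{Y'}}\circ\tau^-\tau(\underline f)$ together with their duals show that $\tau$ and $\tau^-$ are mutually quasi-inverse, completing the proof.
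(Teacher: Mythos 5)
Your overall strategy (reduce to an indecomposable non-projective $Y$, bring in the almost split conflation $\delta$, and compute $\underline{\epsilon_Y}$ from its defining formula) is the same as the paper's, but the key step has a genuine gap, and it is exactly the one you flag yourself without filling. You assume that the isomorphisms $\phi_Y$ and $\psi_{\tau Y}$ fixed at the start of Section~3 are \emph{coherent}: that there is a single $\gamma=\phi_{Y,\tau Y}(\overline{\Id_{\tau Y}})$ with $\gamma(\delta)\neq 0$ such that $\phi_Y$ and $\psi_{\tau Y}$ are simultaneously the isomorphisms produced from this $\gamma$ by parts (1) and (2) of Lemma~\ref{lem:representable}, so that $\gamma=\psi_{\tau Y,Y}(\underline{\Id_Y})$. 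Neither half of this is available. First, $\phi_Y$ and $\psi_{\tau Y}$ were fixed arbitrarily and independently; a natural isomorphism $\Homco(-,\tau Y)\to D\Extc(Y,-)$ determines $\gamma$ only up to precomposition with an automorphism of $\tau Y$, and since $\overline{\End}(\tau Y)/\mathrm{rad}$ need not be one-dimensional over $k$, it is not even automatic that $\gamma(\delta)\neq 0$ for the given $\phi_Y$. Second, you cannot "choose" the conflation or the isomorphisms so as to make them coherent: the functors $\tau$, $\tau^-$ and the transformation $\underline\epsilon$ are defined in terms of the already fixed $\phi$ and $\psi$, so the proposition must be proved for those. (There is also a type error in your conclusion: $\underline{\epsilon_Y}$ is a morphism $\tau^-\tau Y\to Y$ between objects that are in general only isomorphic in $\underline\CC$, so it cannot literally equal $\underline{\Id_Y}$.)

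The paper's proof is built precisely to avoid this coherence assumption. It keeps the arbitrary $\phi_Y$, $\psi_{\tau Y}$, sets $\beta=\phi_{Y,\tau Y}(\overline{\Id_{\tau Y}})$ and $\alpha=\psi_{\tau Y,\tau^-\tau Y}(\underline{\Id_{\tau^-\tau Y}})$, and proves $\beta=\alpha\circ\Extc(\epsilon_Y,\tau Y)$. It then takes the natural isomorphism $\phi'\colon\Homco(-,X)\to D\Extc(Y,-)$ coming from Lemma~\ref{lem:representable}(1), whose associated $\beta'$ is known to satisfy $\beta'(\delta)\neq 0$, and compares $\phi'$ with the fixed $\phi_Y$ through a Yoneda morphism $s\colon X\to\tau Y$, yielding $\beta'=\beta\circ\Extc(Y,s)$. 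Combining these gives $0\neq\beta'(\delta)=\alpha(s.(\delta.\epsilon_Y))$, so $\delta.\epsilon_Y$ is non-split; since $\delta$ is almost split this forces $\epsilon_Y$ to be a retraction, and together with $\tau^-\tau Y\simeq Y$ in $\underline\CC$ (Lemma~\ref{lem:tau^-tauY=Y}) and indecomposability of $Y$ this makes $\underline{\epsilon_Y}$ an isomorphism. To repair your argument you would need to supply this comparison step (or an equivalent one); as written, "once this coherence is in place" is the entire content of the proof.
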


\begin{proof}
  We only prove that $\underline{\epsilon_Y}$ is an isomorphism for each $Y\in\CC_r$. We may assume that $Y$ is indecomposable and non-projective in $\CC$. Let $\alpha=\psi_{\tau Y,\tau^-\tau Y}(\underline{\Id_{\tau^-\tau Y}})$ in $D\Extc(\tau^-\tau Y,\tau Y)$ and let $\beta=\phi_{Y,\tau Y}(\overline{\Id_{\tau Y}})$ in $D\Extc(Y,\tau Y)$. By the definition of $\underline{\epsilon_Y}$, we have $\beta = \psi_{\tau Y,Y} (\underline{\epsilon_Y})$.

  Consider the following commutative diagram
  \[\xymatrix@C+1em{
    \Homcu(\tau^-\tau Y,\tau^-\tau Y)\ar[r]^-{\psi_{\tau Y,\tau^-\tau Y}}\ar[d]_-{\Homcu(\tau^-\tau Y,\underline{\epsilon_Y})}
      &D\Extc(\tau^-\tau Y,\tau Y)\ar[d]^-{D\Extc(\epsilon_Y,\tau Y)}\\
    \Homcu(\tau^-\tau Y,Y)\ar[r]^-{\psi_{\tau Y,Y}}
      &D\Extc(Y,\tau Y).
  }\]
  By a diagram chasing, we obtain
  \[\begin{split}
    \beta &= \psi_{\tau Y,Y} (\underline{\epsilon_Y})\\
    &= (\psi_{\tau Y,Y} \circ
      \Homcu(\tau^-\tau Y, \underline{\epsilon_Y}))
      (\underline{\Id_{\tau^-\tau Y}})\\
    &= (D\Extc(\epsilon_Y,\tau Y) \circ \psi_{\tau Y,\tau^-\tau Y})
      (\underline{\Id_{\tau^-\tau Y}})\\
    &= D\Extc(\epsilon_Y,\tau Y) (\alpha)\\
    &= \alpha \circ \Extc(\epsilon_Y, \tau Y).
  \end{split}\]
  Here, the third equality holds by the commutative diagram. The fourth equality holds by the definition of $\alpha$.

  By Lemma~\ref{lem:ass}(1), there exists an almost split conflation $\delta\colon X\to E\to Y$. By Lemma~\ref{lem:representable}(1), we have a natural isomorphism $\phi'\colon\Homco(-,X)\to D\Extc(Y,-)$ such that $\phi'_X(\overline{\Id_X})(\delta)\ne 0$. Setting $\beta'=\phi'_X(\overline{\Id_X})$, we have $\beta'(\delta)\neq 0$. By Yoneda's lemma, there exists some $s\colon X\to\tau Y$ such that $\Homco(-,\overline{s}) = \phi_Y^{-1} \circ \phi'$. We obtain
  \[\beta' = \phi'_X (\overline{\Id_X}) = (\phi_{Y,X} \circ \Homco(X, \overline{s})) (\overline{\Id_X}) = \phi_{Y,X} (\overline{s}).\]

  Consider the following commutative diagram
  \[\xymatrix{
    \Homco(\tau Y,\tau Y)\ar[r]^-{\phi_{Y,\tau Y}} \ar[d]_-{\Homco(\overline{s}, \tau Y)}
      &D\Extc(Y ,\tau Y)\ar[d]^-{D\Extc(Y,s)}\\
    \Homco(X,\tau Y)\ar[r]^-{\phi_{Y,X}}
      &D\Extc(Y,X).
  }\]
  By a diagram chasing, we obtain
  \[\begin{split}
    \beta' &= \phi_{Y,X} (\overline{s})\\
    &= (\phi_{Y,X} \circ \Homco(\overline{s}, \tau Y))
      (\overline{\Id_{\tau Y}})\\
    &= (D\Extc(Y, s) \circ \phi_{Y, \tau Y})
      (\overline{\Id_{\tau Y}})\\
    &= D\Extc(Y, s) (\beta)\\
    &= \beta \circ \Extc(Y, s).
  \end{split}\]
  Here, the third equality holds by the commutative diagram. The fourth equality holds by the definition of $\beta$.
  Since $\beta = \alpha \circ \Extc(\epsilon_Y,\tau Y)$, we obtain
  \[
    \beta' = \alpha \circ \Extc(\epsilon_Y,\tau Y) \circ \Extc(Y,s).
  \]

  Then we have that
  \[
    0 \neq \beta'(\delta)
    = \alpha((s.\delta).\epsilon_Y)
    = \alpha(s.(\delta.\epsilon_Y)).
  \]
  In particular, the conflation $\delta.\epsilon_Y$ is non-split. It follows that $\epsilon_Y\colon\tau^-\tau Y\to Y$ is a retraction in $\CC$, since $\delta$ is almost split. By Lemma~\ref{lem:tau^-tauY=Y}(1), we have that $\tau^-\tau Y\simeq Y$ in $\underline\CC$. It follows that $\underline{\epsilon_Y}$ is an isomorphism in $\underline\CC$.
\end{proof}

The following lemma shows that $(\tau^-,\tau)$ forms an adjoint pair, with unit $\overline\eta$ and counit $\underline\epsilon$; see \cite[Section~IV.1]{MacLane1971Categories}.

\begin{lem}
  We have $\tau(\underline{\epsilon_Y})\circ\overline{\eta_{\tau Y}}=\overline{\Id_{\tau Y}}$ for each $Y\in\underline{\CC_r}$, and $\underline{\epsilon_{\tau^-X}}\circ\tau^-(\overline{\eta_X})=\underline{\Id_{\tau^-X}}$ for each $X\in\overline{\CC_l}$.
\end{lem}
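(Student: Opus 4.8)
The plan is to recognize the two displayed equalities as the triangle (zig-zag) identities of an adjunction, and to organize the proof so that they fall out of a single bifunctorial natural isomorphism. The starting point is that, for $X\in\CC_l$ and $Y\in\CC_r$, the object $\tau Y$ lies in $\CC_l$ and $\tau^-X$ lies in $\CC_r$ by Lemma~\ref{lem:tau^-tauY=Y}, so both
\[\psi_{X,Y}\colon\Homcu(\tau^-X,Y)\to D\Extc(Y,X) \quad\text{and}\quad \phi_{Y,X}\colon\Homco(X,\tau Y)\to D\Extc(Y,X)\]
are isomorphisms. Composing them gives an isomorphism
\[\Theta_{X,Y}=\phi_{Y,X}^{-1}\circ\psi_{X,Y}\colon\Homcu(\tau^-X,Y)\xrightarrow{\ \sim\ }\Homco(X,\tau Y).\]
Unwinding the definitions, one has $\underline{\epsilon_Y}=\Theta_{\tau Y,Y}^{-1}(\overline{\Id_{\tau Y}})$ and $\overline{\eta_X}=\Theta_{X,\tau^-X}(\underline{\Id_{\tau^-X}})$, so $\underline\epsilon$ and $\overline\eta$ are precisely the counit and unit attached to $\Theta$ by the standard recipe.

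The key step is to check that $\Theta$ is natural in both $Y\in\underline{\CC_r}$ and $X\in\overline{\CC_l}$. Naturality in $Y$ follows by combining the naturality of $\psi_X$ in its argument, which reads $\psi_{X,Y_1}\circ f_*=D\Extc(f,X)\circ\psi_{X,Y_0}$ for $\underline f\colon Y_0\to Y_1$, with the defining commutative square of $\tau(\underline f)$, which (evaluated at $X$) reads $\phi_{Y_1,X}\circ\tau(f)_*=D\Extc(f,X)\circ\phi_{Y_0,X}$. Since $D\Extc(f,X)$ is the common factor, substituting one relation into the other cancels it and leaves $\tau(\underline f)_*\circ\Theta_{X,Y_0}=\Theta_{X,Y_1}\circ f_*$. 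Naturality in $X$ is entirely dual, using the naturality of $\phi_Y$ in its argument together with the defining square of $\tau^-(\overline g)$. I expect this verification to be the main obstacle: it is where the bookkeeping lives, and one must keep straight the variance of $D\Extc$ in each slot and invoke the correct defining square in each case.

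Once naturality is in hand, the identities are formal. From naturality of $\Theta$ in $Y$, applied to a morphism $\underline h\colon\tau^-X\to Y$ (so $Y_0=\tau^-X$, $Y_1=Y$) and evaluated on $\underline{\Id_{\tau^-X}}$, one obtains the adjunction formula $\Theta_{X,Y}(\underline h)=\tau(\underline h)\circ\overline{\eta_X}$. Specializing to $X=\tau Y$ and $\underline h=\underline{\epsilon_Y}$, and using $\Theta_{\tau Y,Y}(\underline{\epsilon_Y})=\overline{\Id_{\tau Y}}$, yields
\[\tau(\underline{\epsilon_Y})\circ\overline{\eta_{\tau Y}}=\Theta_{\tau Y,Y}(\underline{\epsilon_Y})=\overline{\Id_{\tau Y}},\]
which is the first identity. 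Dually, naturality of $\Theta$ in $X$ gives $\Theta_{X,Y}^{-1}(\overline k)=\underline{\epsilon_Y}\circ\tau^-(\overline k)$ for $\overline k\colon X\to\tau Y$; specializing to $Y=\tau^-X$ and $\overline k=\overline{\eta_X}$, and using $\Theta_{X,\tau^-X}^{-1}(\overline{\eta_X})=\underline{\Id_{\tau^-X}}$, yields the second identity.

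I would note that the two derivations are mirror images of one another, so in writing up the proof it suffices to carry out the case of $\underline\epsilon$ in detail and to indicate that the case of $\overline\eta$ follows by the dual argument, exactly as in the earlier lemmas of this section. This matches the proof style already used for Lemmas~\ref{lem:iso-close}, \ref{lem:tau(iso)}, and \ref{lem:naturalness}.
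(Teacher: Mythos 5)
Your proposal is correct and is essentially the paper's argument in different packaging: the paper proves the first identity by chasing the same two commutative squares (the defining square of $\tau(\underline{\epsilon_Y})$ and the naturality square of $\psi_{\tau Y}$ at $\underline{\epsilon_Y}$) that your "naturality of $\Theta$ in $Y$" combines, and then invokes the definitions of $\underline{\epsilon_Y}$ and $\overline{\eta_{\tau Y}}$ exactly as you do. Recasting the chase as the standard hom-set adjunction formula $\Theta_{X,Y}(\underline h)=\tau(\underline h)\circ\overline{\eta_X}$ is a clean way to organize it, but the underlying computation is identical.
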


\begin{proof}
  We only prove the first equality. We have the following commutative diagram
  \[\xymatrix@+1em{
    \Homco(\tau Y,\tau\tau^-\tau Y)\ar[r]^-{\phi_{\tau^-\tau Y,\tau Y}}\ar[d]_-{\Homco(\tau Y,\tau(\underline{\epsilon_Y}))}
      &D\Extc(\tau^-\tau Y,\tau Y)\ar[d]_-{D\Extc(\epsilon_Y,\tau Y)}
      &\Homcu(\tau^-\tau Y,\tau^-\tau Y)\ar[l]_-{\psi_{\tau Y,\tau^-\tau Y}}\ar[d]_-{\Homcu(\tau^-\tau Y, \underline{\epsilon_Y})}\\
    \Homco(\tau Y,\tau Y)\ar[r]^-{\phi_{Y,\tau Y}}
      &D\Extc(Y,\tau Y)
      &\Homcu(\tau^-\tau Y,Y)\ar[l]_-{\psi_{\tau Y,Y}}.
  }\]
  The left square commutes by the definition of $\tau(\underline{\epsilon_Y})$, and the right square commutes since $\psi_{\tau Y}$ is natural.
  By a diagram chasing, we obtain
  \[\begin{split}
    \overline{\Id_{\tau Y}}
    &= (\phi_{Y,\tau Y}^{-1} \circ \psi_{\tau Y,Y})
      (\underline{\epsilon_Y})\\
    &= (\phi_{Y,\tau Y}^{-1} \circ \psi_{\tau Y,Y} \circ
      \Homcu(\tau^-\tau Y, \underline{\epsilon_Y}))
      (\underline{\Id_{\tau^-\tau Y}})\\
    &= (\Homco(\tau Y,\tau(\underline{\epsilon_Y})) \circ
      \phi_{Y,\tau Y}^{-1} \circ \psi_{\tau Y,Y})
      (\underline{\Id_{\tau^-\tau Y}})\\
    &= \Homco(\tau Y,\tau(\underline{\epsilon_Y}))
      (\overline{\eta_{\tau Y}})\\
    &= \tau(\underline{\epsilon_Y}) \circ \overline{\eta_{\tau Y}}.
  \end{split}\]
  Here, the first equality holds by the definition of $\underline{\epsilon_Y}$. The third equality holds by the commutative diagram. The fourth equality holds by the definition of $\overline{\eta_{\tau Y}}$.
\end{proof}

\begin{defn}
  Let $\CC$ be a Hom-finite Krull-Schmidt exact category. We call the sextuple obtained above
\[\set{\CC_r, \CC_l, \phi, \psi, \tau, \tau^-}\]
the \emph{generalized Auslander-Reiten duality} on $\CC$ and call the functors $\tau$ and $\tau^-$ the \emph{generalized Auslander-Reiten translation functors}.
\end{defn}

We mention that the functor $\tau$ depends on $\phi$ and the functor $\tau^-$ depends on $\psi$. Then $\CC$ \emph{has Auslander-Reiten duality} in the sense of \cite{LenzingZuazua2004Auslander} if and only if $\CC_l=\CC=\CC_r$.

\begin{rem}
  Let $(\CC,\CE)$ be a Frobenius category. Then the projectively stable category and the injectively stable category of $\CC$ are the same and have a natural triangulated structure. The generalized Auslander-Reiten duality on $\CC$ gives the generalized Serre duality on $\underline\CC$ in the sense of \cite{Chen2011Generalized}. More precisely, we have $(\underline\CC)_r=\underline{\CC_r}$ and $(\underline\CC)_l=\overline{\CC_l}$. Let $\Sigma$ be the translation functor of $\underline\CC$. Then the functor $\Sigma \tau \colon \underline{\CC_r} \to \overline{\CC_l}$ gives the generalized Serre functor of $\underline\CC$.
\end{rem}

\section{The category of finitely presented representations}

From now on, we let $k$ be a field and $Q=(Q_0,Q_1)$ be a quiver. Here, $Q_0$ is the set of vertices and $Q_1$ is the set of arrows. Given an arrow $\alpha\colon a\to b$, denote by $s(\alpha)=a$ its source and by $t(\alpha)=b$ its target.
A path $p$ of length $l\geq1$ is a sequence of arrows $\alpha_l\cdots\alpha_2\alpha_1$ such that $s(\alpha_{i+1})=t(\alpha_i)$ for each $i=1,2,\dots,l-1$. We let $s(p)=s(\alpha_1)$ and $t(p)=t(\alpha_l)$. A \emph{left infinite path} is an infinite sequence of arrows $\alpha_1\alpha_2\cdots\alpha_n\cdots$ such that $s(\alpha_i)=t(\alpha_{i+1})$ for each $i\geq 1$. Dually, a \emph{right infinite path} is an infinite sequence of arrows $\cdots\alpha_n\cdots\alpha_2\alpha_1$ such that $s(\alpha_{i+1})=t(\alpha_i)$ for each $i\geq 1$.

Recall that a quiver $Q$ is \emph{locally finite} if for each $a\in Q_0$, the set of arrows starting at $a$ or ending at $a$ is finite. A quiver $Q$ is \emph{interval-finite} if for any $a,b\in Q_0$, the set of paths $p$ with $s(p)=a$ and $t(p)=b$ is finite. We will assume that $Q$ is locally finite and interval-finite.

A representation $M$ of $Q$ is called \emph{locally finite dimensional} if $M(a)$ is finite dimensional for each $a\in Q_0$, and is \emph{finite dimensional} if moreover $\bigoplus_{a\in Q_0}M(a)$ is finite dimensional. Denote by $\rep(Q)$ the category of locally finite dimensional representations, and by $\rep^b(Q)$ the full subcategory formed by finite dimensional representations. Then the Matlis duality induces a duality $D\colon\rep(Q)\to\rep(Q^{\op})$, which sends $\rep^b(Q)$ into $\rep^b(Q^{\op})$. Here, $Q^{\op}$ is the opposite quiver of $Q$.

Recall from \cite[Section~2]{BongartzGabriel1982Covering} that the \emph{path-category} $\CQ$ of $Q$ has $Q_0$ as the set of objects; if $x,y\in Q_0$, the morphisms from $x$ to $y$ are the linear combinations of paths from $x$ to $y$. It is well known that $\rep(Q)$ is equivalent to the category of covariant functors from $\CQ$ to $k$-mod.
We denote by $\proj(Q)$ the full subcategory of $\rep(Q)$ formed by objects which corresponds to the finite direct sums of representable functors $\Hom_\CQ(x, -)$ for some objects $x \in \CQ$.

A representation $M$ is called \emph{finitely presented}, if there exists an exact sequence $P_1 \xrightarrow{u} P_0 \xrightarrow{v} M \to 0$ with $P_1, P_0 \in \proj(Q)$. The exact sequence is called a \emph{presentation} of $M$; it is a \emph{minimal presentation} if moreover the kernels of $u$ and $v$ are superfluous in $P_1$ and $P_0$, respectively.
Denote by $\rep^+(Q)$ the full subcategory of $\rep(Q)$ formed by the finitely presented representations. We mention that $\rep^+(Q)$ is a Hom-finite hereditary abelian subcategory, which is closed under extensions in $\rep(Q)$; moreover, we have $\rep^b(Q)\subseteq\rep^+(Q)$;
see \cite[Proposition~1.15]{BautistaLiuPaquette2013Representation}.

Denote by $A = kQ$ the path algebra of $Q$. Then $A$ admits a complete set of primitive orthogonal idempotents $\set{e_a \middle| a \in Q_0}$. Recall that a left $A$-module $M$ is called \emph{unitary} if $AM=M$, which is equivalent to $M = \bigoplus_{a\in Q_0} e_a M$.
Then $_A A$ is a left unitary $A$-modules.
It is well known that the category of representations of $Q$ is equivalent to the category of unitary left $A$-modules.
For convenience, we will identify representations of $Q$ with the corresponding unitary left $A$-modules.
We mention that the contravariant functor $\Hom_A(-, A) \colon \proj(Q) \to \proj(Q^\op)$ is a duality;
see \cite[Lemma~1.18]{BautistaLiuPaquette2013Representation}.

Let $P_1 \xrightarrow{g} P_0 \to M \to 0$ be a minimal presentation of a representation $M$ in $\rep(Q)$. The cokernel of $\Hom_A(g,A)$ is called the \emph{transpose} $\Tr M$ of $M$. Here, $\Tr M$ has no nonzero projective direct summands. We observe that a morphism $f\colon X\to Y$ in $\rep^+(Q)$ is projectively trivial if and only if it factors through a projective object, since $\rep^+(Q)$ has enough projectives. Then we obtain a duality $\Tr\colon\underline\rep^+(Q)\to\underline\rep^+(Q^{\op})$. Here, $\underline\rep^+(Q)$ can be embedded in $\rep^+(Q)$ as a full subcategory, since $\rep^+(Q)$ is hereditary. Then we have a contravariant functor
\[\Tr\colon\rep^+(Q)\longrightarrow\rep^+(Q^{\op}).\]

The following lemma is contained in the proof of \cite[Theorem~2.8]{BautistaLiuPaquette2013Representation}.

\begin{lem}\label{lem:AR-formula}
  Let $L,M\in\rep(Q)$.
  \begin{enumerate}
    \item If $M$ lies in $\rep^+(Q)$, then there exists an isomorphism $\Hom(L,D\Tr M)\simeq D\Ext^1(M,L)$, which is natural in $L$ and $M$.
    \item If $M$ lies in $\rep^b(Q)$, then there exists an isomorphism $\Hom(\Tr DM,L)\simeq D\Ext^1(L,M)$, which is natural in $L$ and $M$. \qed
  \end{enumerate}
\end{lem}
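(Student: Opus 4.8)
The plan is to compute both sides from a minimal projective presentation of $M$ together with the defining four-term sequence of the transpose. Since $\rep^+(Q)$ is hereditary, a minimal presentation $P_1 \xrightarrow{g} P_0 \to M \to 0$ of $M \in \rep^+(Q)$ is in fact a short exact sequence $0 \to P_1 \xrightarrow{g} P_0 \to M \to 0$ with $P_1, P_0 \in \proj(Q)$, because the kernel of $P_0 \to M$ is a submodule of a projective and hence projective, and it is finitely generated as $M$ is finitely presented. Applying the duality $\Hom_A(-, A) \colon \proj(Q) \to \proj(Q^\op)$ to $g$ and passing to the cokernel gives, by definition, $\Tr M$ as the cokernel of $\Hom_A(g, A)$; applying $D$ then realizes $D\Tr M$ as the kernel of $D\Hom_A(g, A) \colon D\Hom_A(P_1, A) \to D\Hom_A(P_0, A)$.

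The key point will be a natural isomorphism $\Hom(L, D\Hom_A(P, A)) \simeq D\Hom(P, L)$ for each $P \in \proj(Q)$ and each $L \in \rep(Q)$. This follows from the tensor-hom adjunction $\Hom_A(L, D\Hom_A(P, A)) \simeq D(\Hom_A(P, A) \otimes_A L)$ together with the isomorphism $\Hom_A(P, A) \otimes_A L \simeq \Hom(P, L)$, which holds because $P$ is finitely generated projective; concretely, for $P = \Hom_\CQ(x, -)$ both sides evaluate to $L(x)$. Applying the left-exact functor $\Hom(L, -)$ to the kernel sequence for $D\Tr M$ and substituting this isomorphism, I obtain
\[
  \Hom(L, D\Tr M) \simeq \Ker\bigl(D\Hom(g, L)\bigr) = D\Cok\bigl(\Hom(g, L)\bigr).
\]
Finally, applying $\Hom(-, L)$ to the short exact sequence $0 \to P_1 \to P_0 \to M \to 0$ identifies $\Cok(\Hom(g, L))$ with $\Ext^1(M, L)$, since $P_0, P_1$ are projective and the category is hereditary. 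This yields the isomorphism of statement (1); naturality in $L$ is inherited from the tensor-hom adjunction, and naturality in $M$ from the functoriality of $\Tr \colon \rep^+(Q) \to \rep^+(Q^\op)$ recorded above.

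For statement (2), I would apply (1) over the opposite quiver to $DM \in \rep^+(Q^\op)$ and transport the result through the Matlis duality $D \colon \rep(Q) \to \rep(Q^\op)$, using that $D$ exchanges $\Hom$ with its dual and sends $\rep^b(Q)$ into $\rep^b(Q^\op)$. Alternatively, one can argue directly using a minimal presentation of $DM$, whose $D$-dual is an injective copresentation $0 \to M \to DP_0 \to DP_1$ of $M$, and repeat the computation with the roles of projectives and injectives interchanged.

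The main obstacle is not the diagram chase but the careful handling of finiteness and of the non-unital structure of $A = kQ$ when $Q$ is infinite: the isomorphism $\Hom_A(P, A) \otimes_A L \simeq \Hom(P, L)$ must be justified for the finitely generated (but possibly infinite dimensional) projectives of $\proj(Q)$ inside the category of unitary modules, and one must check that $D\Hom_A(-, A)$ lands among the injectives and is compatible with $D$. In statement (2) there is the additional subtlety that $\Tr DM$ need not be finite dimensional even though $M$ is, so the interplay between $D$, double duals, and the hypothesis $M \in \rep^b(Q)$ must be tracked with care; this is exactly the technical content that the cited results of \cite{BautistaLiuPaquette2013Representation} are designed to supply.
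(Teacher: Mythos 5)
The paper offers no argument for this lemma at all: it is stated with a \qed and attributed to the proof of \cite[Theorem~2.8]{BautistaLiuPaquette2013Representation}. Your proposal therefore supplies a proof where the paper only supplies a citation, and the route you take --- minimal projective presentation, which by heredity and minimality is a short exact sequence $0\to P_1\xrightarrow{g} P_0\to M\to 0$; the duality $\Hom_A(-,A)$ on $\proj(Q)$; the natural isomorphism $\Hom(L,D\Hom_A(P,A))\simeq D\Hom(P,L)$ via tensor-hom adjunction; and the identification $\Cok(\Hom(g,L))\simeq\Ext^1(M,L)$ --- is the standard derivation of the Auslander--Reiten formula and is essentially the argument carried out in the cited reference. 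The chain $\Hom(L,D\Tr M)\simeq\Ker(D\Hom(g,L))=D\Cok(\Hom(g,L))\simeq D\Ext^1(M,L)$ is correct, and it rightly produces the \emph{unstabilized} $\Hom$ on the left because only the kernel of $D\Hom_A(g,A)$ is used, so the term $\Hom_A(M,A)$ that forces stabilization in the non-hereditary setting never enters. Your reduction of (2) to (1) over $Q^{\op}$ via $D$ is also sound, granted that $DM\in\rep^b(Q^{\op})\subseteq\rep^+(Q^{\op})$ and that $DD\simeq\Id$ on locally finite dimensional representations. The two places where you wave rather than prove are exactly the right ones to flag: the isomorphism $\Hom_A(P,A)\otimes_A L\simeq\Hom(P,L)$ for the (possibly infinite dimensional) projectives $Ae_x$ in the category of unitary modules, which does reduce to the evaluation $e_xA\otimes_A L\simeq L(x)\simeq\Hom_A(Ae_x,L)$ and uses interval-finiteness to keep $D(e_xA)$ in $\rep(Q)$; and naturality in $M$, which requires lifting morphisms to presentations and checking independence of the lift (this is where the embedding $\underline{\rep}^+(Q)\hookrightarrow\rep^+(Q)$, noted in the paper, is used). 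Neither is a gap in the mathematics, only in the level of detail, and both are covered by the cited results.
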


The following lemma is due to
\cite[Theorem~2.8, Corollary~2.9 and Propositions~3.6]{BautistaLiuPaquette2013Representation}.

\begin{lem}\label{lem:rep(Q)-ass}
  Let $M\in\rep^+(Q)$ be an indecomposable representation.
  \begin{enumerate}
    \item If $M$ is non-projective, then there exists an almost split sequence $0\to D\Tr M\to E\to M\to 0$ in $\rep(Q)$.
    \item If $M$ is non-injective and lies in $\rep^b(Q)$, then there exists an almost split sequence $0\to M\to E\to \Tr DM\to 0$ in $\rep^+(Q)$.
    \item Assume that $0\to M\to E\to N\to 0$ is an exact sequence in $\rep(Q)$. Then it is an almost split sequence in $\rep^+(Q)$ if and only if it is an almost split sequence in $\rep(Q)$ with $M\in\rep^b(Q)$. \qed
  \end{enumerate}
\end{lem}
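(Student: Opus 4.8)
The plan is to derive parts (1) and (2) from the Auslander--Reiten formulas of Lemma~\ref{lem:AR-formula} together with the existence criterion Lemma~\ref{lem:ass}, and to obtain part (3) by comparing the almost split sequences produced in (1) and (2), the only genuinely hard input being a finiteness statement.

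For (1) I would apply Lemma~\ref{lem:ass}(1) to the abelian (hence exact) category $\rep(Q)$ with $Y=M$ and $Y'=D\Tr M$; since Lemma~\ref{lem:ass} precedes the standing Hom-finiteness hypothesis, it is legitimate to invoke it for the non-Hom-finite $\rep(Q)$. Lemma~\ref{lem:AR-formula}(1) supplies a natural isomorphism $\Hom(-,D\Tr M)\simeq D\Ext^1(M,-)$ of contravariant functors. The first step is to descend this to the injectively stable category: taking $Z=M$ in the definition of an injectively trivial morphism shows that $D\Ext^1(M,-)$ kills injectively trivial maps, so by naturality every injectively trivial morphism into $D\Tr M$ vanishes, whence $\overline{\Hom}(-,D\Tr M)\simeq D\Ext^1(M,-)$. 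The second step is finiteness: as $\rep(Q)$ is hereditary and $M$ is finitely presented, a finite projective presentation of $M$ computes $\Ext^1(M,Z)$ as a quotient of $\Hom(P_1,Z)$, which is finite dimensional because $Q$ is locally finite and interval-finite, so $\overline{\Hom}(Z,D\Tr M)\simeq D\Ext^1(M,Z)$ lies in $k$-mod for every $Z$. The third step records that $D\Tr M$ is a non-injective strongly indecomposable object: it is indecomposable with local endomorphism ring because $\Tr M$ is, using that $\Tr$ and $D$ are dualities so that $\End(D\Tr M)\simeq\End(\Tr M)^{\op}$, and it is non-injective since otherwise it would vanish in the injectively stable category, forcing $D\Ext^1(M,-)=0$ and contradicting the non-projectivity of $M$. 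Lemma~\ref{lem:ass}(1) then yields an almost split conflation ending at $M$ whose first term is the chosen non-injective strongly indecomposable summand of $D\Tr M$, namely $D\Tr M$ itself; as $\rep(Q)$ is abelian this is the asserted almost split sequence. Part (2) is entirely dual: apply Lemma~\ref{lem:ass}(2) in the Hom-finite category $\rep^+(Q)$ with $Y=M$ and $Y'=\Tr DM$, using Lemma~\ref{lem:AR-formula}(2), the projectively stable version of the same three checks, and the non-injectivity of $M$ in $\rep^+(Q)$.

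For (3) I would first record the soft inheritance principle: if a conflation has all three terms in $\rep^+(Q)$ and is almost split in $\rep(Q)$, then it is almost split in $\rep^+(Q)$, because $\rep^+(Q)$ is a full subcategory, so sections and retractions agree and the required factorizations, being tested against fewer objects, are inherited. For the implication ``almost split in $\rep(Q)$ with $M\in\rep^b(Q)$ $\Rightarrow$ almost split in $\rep^+(Q)$'': the object $M$ is indecomposable non-injective, so part (2) produces an almost split sequence $\delta'$ in $\rep^+(Q)$ starting at $M$, with third term $\Tr DM$; applying part (1) to $\Tr DM$ and using $D\Tr(\Tr DM)\simeq M$, since $\Tr$ and $D$ are dualities, shows $\delta'$ is also almost split in $\rep(Q)$, so by uniqueness of almost split sequences starting at $M$ in $\rep(Q)$ the given $\delta$ is isomorphic to $\delta'$ and hence almost split in $\rep^+(Q)$. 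The reverse implication is the heart of the matter.

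For ``almost split in $\rep^+(Q)$ $\Rightarrow$ almost split in $\rep(Q)$ and $M\in\rep^b(Q)$'': let $N$ be the indecomposable non-projective third term of $\delta$, and let $\delta''\colon 0\to D\Tr N\to E''\to N\to 0$ be the almost split sequence in $\rep(Q)$ from part (1). Once one knows that $D\Tr N$ is finite dimensional, $\delta''$ has all terms in $\rep^+(Q)$, hence is almost split there by the inheritance principle, and uniqueness of almost split sequences ending at $N$ in $\rep^+(Q)$ forces $\delta\simeq\delta''$; this simultaneously gives $M\simeq D\Tr N\in\rep^b(Q)$ and shows $\delta$ is almost split in $\rep(Q)$. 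The main obstacle is precisely this finiteness claim --- that the left-hand term of an almost split sequence in $\rep^+(Q)$ is finite dimensional --- which does not follow formally from the Auslander--Reiten formulas and instead requires the structural analysis of $\tau=D\Tr$ for locally finite interval-finite quivers; this is where I expect the real work to lie, and where the results of \cite{BautistaLiuPaquette2013Representation}, in particular their Proposition~3.6, are indispensable.
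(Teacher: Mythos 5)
The paper gives no argument for this lemma at all: it is stated with a \qed and attributed wholesale to \cite[Theorem~2.8, Corollary~2.9 and Proposition~3.6]{BautistaLiuPaquette2013Representation}, so there is no internal proof to compare against, and your proposal does strictly more than the paper does. Your derivation of (1) and (2) is sound and is a genuinely different (more self-contained) route: you obtain them as instances of the paper's own machinery by feeding Lemma~\ref{lem:AR-formula} into Lemma~\ref{lem:ass}, and the checks you list are the right ones --- the descent of the AR~formula to the stable category, the finiteness of $\Ext^1(M,Z)$ via a two-term projective presentation (using that $\rep(Q)$ is hereditary and $Q$ is locally finite), the strong indecomposability and non-injectivity of $D\Tr M$ (resp.\ non-projectivity of $\Tr DM$), and the observation that Lemma~\ref{lem:ass} may legitimately be applied to the non-Hom-finite $\rep(Q)$ because it precedes the standing Hom-finiteness hypothesis. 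For (3), your inheritance principle and the uniqueness arguments are correct, and you have correctly isolated the one step that does not follow formally, namely that the first term of an almost split sequence in $\rep^+(Q)$ is finite dimensional; you defer this to Proposition~3.6 of \cite{BautistaLiuPaquette2013Representation}, which is exactly the source the paper cites for the whole lemma, so your write-up is incomplete at precisely the point where the paper is too. One small loose end in the easy direction of (3): before invoking part (2) you must know that $M$ is non-injective in $\rep^+(Q)$, not merely in $\rep(Q)$ (a priori the non-split monomorphism out of $M$ could land outside $\rep^+(Q)$). This is easily repaired: if $M$ is non-injective in $\rep(Q)$ then $\Tr DM\neq 0$ by Lemma~\ref{lem:AR-formula}(2), whence $\Ext^1(\Tr DM,M)\simeq D\Hom(\Tr DM,\Tr DM)\neq 0$ with $\Tr DM\in\rep^+(Q)$.
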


\begin{lem}\label{lem:I(X,Y)}
  Let $f\colon X\to Y$ be an injectively trivial morphism in $\rep^+(Q)$. If $Y$ has no nonzero injective direct summands and lies in $\rep^b(Q)$, then we have $f=0$.
\end{lem}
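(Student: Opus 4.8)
The plan is to read off the hypothesis on $f$ from the Auslander-Reiten formula, and then to realize the target $Y$ as a value of the functor $D\Tr$, at which point a single morphism detects $f$.

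The first step translates injective triviality into a vanishing of $\Hom$. Fix an arbitrary object $Z\in\rep^+(Q)$ and apply Lemma~\ref{lem:AR-formula}(1) with $Z$ in the role of $M$; this yields an isomorphism $\Hom(L,D\Tr Z)\simeq D\Ext^1(Z,L)$ natural in $L$. I would feed the morphism $f\colon X\to Y$ into this naturality in $L$, producing a commutative square whose vertical maps are $\Hom(f,D\Tr Z)$ and $D\Ext^1(Z,f)$. Since $f$ is injectively trivial we have $\Ext^1(Z,f)=0$, hence $D\Ext^1(Z,f)=0$, and the square forces $\Hom(f,D\Tr Z)=0$. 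As $Z$ was arbitrary, I conclude that $h\circ f=0$ for every $Z\in\rep^+(Q)$ and every $h\colon Y\to D\Tr Z$.

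The second step exhibits one such $h$ that is invertible. I would take $Z=\Tr DY$. Since $Y\in\rep^b(Q)$, we have $DY\in\rep^b(Q^{\op})\subseteq\rep^+(Q^{\op})$, so $Z=\Tr DY$ is defined and lies in $\rep^+(Q)$, and $D\Tr Z=D\Tr\Tr DY$. Because $Y$ has no nonzero injective direct summand and $D$ is a duality interchanging injective and projective objects, $DY$ has no nonzero projective direct summand; hence the isomorphism $\Tr\Tr DY\simeq DY$ coming from the duality $\Tr\colon\underline\rep^+(Q)\to\underline\rep^+(Q^{\op})$ relates two objects without projective summands, and is therefore a genuine isomorphism in $\rep^+(Q)$ by the Krull-Schmidt property. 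Applying $D$ and using $D^2\simeq\Id$ on $\rep^b(Q)$, I obtain an isomorphism $h\colon Y\to D\Tr Z$. Combined with the first step, $h\circ f=0$ then gives $f=0$.

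The main obstacle is the second step, namely identifying $Y$ as an object of the form $D\Tr Z$ with $Z\in\rep^+(Q)$, which is precisely the relation $D\Tr\Tr D\simeq\Id$ on non-injective finite dimensional representations. Both hypotheses on $Y$ enter exactly here: finite dimensionality keeps $DY$ finitely presented and supplies $D^2\simeq\Id$, while the absence of injective summands is what upgrades the \emph{stable} isomorphism $\Tr\Tr DY\simeq DY$ to an honest one. By contrast, the first step is a routine diagram chase from the naturality in Lemma~\ref{lem:AR-formula}(1).
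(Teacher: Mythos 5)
Your argument is correct, and it takes a genuinely different route from the paper's. The paper reduces to $Y$ indecomposable, invokes the almost split sequence $0\to Y\to E\to\Tr DY\to 0$ from Lemma~\ref{lem:rep(Q)-ass}(2), and uses Lemma~\ref{lem:representable}(1) to identify $\overline\Hom(X,Y)$ with $D\Ext^1(\Tr DY,X)$; comparing this with the isomorphism $\Hom(X,D\Tr\Tr DY)\simeq D\Ext^1(\Tr DY,X)$ of Lemma~\ref{lem:AR-formula}(1) and with $D\Tr\Tr DY\simeq Y$, it concludes by a dimension count that the quotient map $\Hom(X,Y)\to\overline\Hom(X,Y)$ is bijective, i.e.\ $\CI(X,Y)=0$. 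You instead exploit only the naturality in $L$ of $\Hom(L,D\Tr Z)\simeq D\Ext^1(Z,L)$: injective triviality of $f$ forces $D\Ext^1(Z,f)=0$, hence every morphism $Y\to D\Tr Z$ kills $f$, and the choice $Z=\Tr DY$ supplies an isomorphism $Y\simeq D\Tr Z$. This bypasses almost split sequences, Lemma~\ref{lem:representable}, the reduction to indecomposables, and the dimension count, and it isolates exactly where both hypotheses on $Y$ enter --- namely only in the identity $D\Tr\Tr DY\simeq Y$, which the paper's proof also needs and likewise records as an observation. The one point to phrase with care is your appeal to ``$D$ interchanges injectives and projectives'': in $\rep^+(Q)$ this is not literally true for all injective objects, but the implication you actually use does hold --- a nonzero projective direct summand $P$ of $DY$ is automatically finite dimensional, hence a finite direct sum of finite-dimensional representables, and $DP$ is then a nonzero injective direct summand of $Y\simeq DDY$ --- so the argument stands.
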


\begin{proof}
  We may assume that $Y$ is indecomposable. By Lemma~\ref{lem:rep(Q)-ass}(2), we have an almost split sequence $0\to Y\to E\to \Tr DY\to 0$ in $\rep^+(Q)$. By Lemma~\ref{lem:AR-formula}(1), we have $\Hom(X,D\Tr\Tr DY)\simeq D\Ext^1(\Tr DY,X)$. By Lemmas~\ref{lem:representable}(1), we have $\overline\Hom(X,Y)\simeq D\Ext^1(\Tr DY,X)$. Observe that $D\Tr\Tr DY\simeq Y$. Then we have $\overline\Hom(X,Y)\simeq\Hom(X,Y)$. Then the result follows.
\end{proof}

Now, we can describe the generalized Auslander-Reiten duality on $\rep^+(Q)$.
Given a collection $\CS$ of objects, denote by $\add\CS$ the category of direct summands of finite direct sums of objects in $\CS$.

\begin{prop}\label{prop:rep(Q)-AR}
  Let $Q$ be a locally finite interval-finite quiver. Set $\CC=\rep^+(Q)$. Then we have
  \[\CC_r=\set{X\in\CC\middle|\Tr X\in\rep^b(Q^{\op})}\]
  and
  \[\CC_l=\add\set{X\middle|\mbox{$X\in\rep^b(Q)$ or $X$ is an injective object in $\CC$}}.\]
  Moreover, the functors $D\Tr$ and $\Tr D$ induce the generalized Auslander-Reiten translation functors.
\end{prop}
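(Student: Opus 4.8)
The plan is to treat the three assertions separately, in each case reducing to indecomposables and then feeding the almost split sequences of Lemma~\ref{lem:rep(Q)-ass} into Proposition~\ref{prop:C_r}. Since $\CC_r$ and $\CC_l$ are additive and closed under direct summands, and the right-hand sides manifestly are too, in the Krull--Schmidt category $\CC=\rep^+(Q)$ it suffices to compare indecomposable objects. I would first describe $\CC_r$. An indecomposable projective $Y$ has $\Ext^1(Y,-)=0$ and $\Tr Y=0$, so it lies in both $\CC_r$ and the right-hand side. For an indecomposable non-projective $Y$, Proposition~\ref{prop:C_r}(1) identifies $Y\in\CC_r$ with the existence of an almost split conflation ending at $Y$ in $\rep^+(Q)$. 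Lemma~\ref{lem:rep(Q)-ass}(1) produces the almost split sequence $0\to D\Tr Y\to E\to Y\to0$ in $\rep(Q)$, and Lemma~\ref{lem:rep(Q)-ass}(3) says this is an almost split sequence inside $\rep^+(Q)$ exactly when its first term $D\Tr Y$ lies in $\rep^b(Q)$; as $D$ restricts to a duality between $\rep^b(Q^{\op})$ and $\rep^b(Q)$, this is equivalent to $\Tr Y\in\rep^b(Q^{\op})$. Comparing indecomposables yields the stated formula for $\CC_r$.

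The description of $\CC_l$ is parallel but slightly simpler. An injective object has $\Ext^1(-,X)=0$, hence lies in $\CC_l$ and in the right-hand side. For an indecomposable non-injective $X$, Proposition~\ref{prop:C_r}(2) identifies $X\in\CC_l$ with the existence of an almost split conflation starting at $X$ in $\rep^+(Q)$; by Lemma~\ref{lem:rep(Q)-ass}(2),(3) such a conflation exists if and only if $X\in\rep^b(Q)$ (sufficiency is Lemma~\ref{lem:rep(Q)-ass}(2), and necessity is the clause $M\in\rep^b(Q)$ in Lemma~\ref{lem:rep(Q)-ass}(3)). Comparing indecomposables then gives $\CC_l=\add\set{X\mid X\in\rep^b(Q)\text{ or }X\text{ is injective}}$.

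It remains to identify the translation functors. For $Y\in\CC_r$ the object $D\Tr Y$ lies in $\rep^b(Q)\subseteq\CC$ and has no injective direct summand (since $\Tr Y$ has no projective summand and $D$ exchanges projectives and injectives on finite-dimensional representations), so Lemma~\ref{lem:I(X,Y)} gives $\CI(-,D\Tr Y)=0$ and hence $\Homco(-,D\Tr Y)=\Hom(-,D\Tr Y)$. Combining this with the isomorphism $\Hom(-,D\Tr Y)\simeq D\Ext^1(Y,-)$ of Lemma~\ref{lem:AR-formula}(1), which is also natural in $Y$, lets me choose $\phi_Y$ so that $\tau Y=D\Tr Y$; naturality in the first variable then forces $\tau(f)=\overline{D\Tr f}$ on morphisms. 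Since $\Tr$ kills morphisms factoring through projectives, $D\Tr$ descends to a functor $\underline{\CC_r}\to\overline{\CC_l}$, so $\tau$ is induced by $D\Tr$. For $X\in\CC_l$, reducing to the non-injective indecomposable case so that $X\in\rep^b(Q)$, the almost split sequence $0\to X\to E\to\Tr DX\to0$ together with Lemma~\ref{lem:representable}(2) furnishes a natural isomorphism $\Homcu(\Tr DX,-)\simeq D\Ext^1(-,X)$, giving $\tau^-X=\Tr DX$ in $\underline\CC$. Functoriality follows either from the naturality in $X$ of Lemma~\ref{lem:AR-formula}(2), or, most cleanly, from the uniqueness of quasi-inverses: one checks using $\Tr\Tr\simeq\Id$ and $DD\simeq\Id$ on the stable categories that $D\Tr$ and $\Tr D$ are mutually quasi-inverse, and since $\tau=D\Tr$ and $\tau^-$ is quasi-inverse to $\tau$ by Proposition~\ref{prop:quasi-inverse}, we obtain $\tau^-\simeq\Tr D$.

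The main obstacle I expect is the identification of $\tau$ with $D\Tr$. The delicate points are verifying that the representing object genuinely lies in $\rep^+(Q)$ --- which is precisely where the condition $\Tr Y\in\rep^b(Q^{\op})$ enters, via Lemma~\ref{lem:rep(Q)-ass}(3) --- and reconciling the plain-$\Hom$ Auslander--Reiten formula of Lemma~\ref{lem:AR-formula} with the stable-$\Hom$ representability built into the very definitions of $\CC_r$ and $\CC_l$, for which the collapse $\Homco(-,D\Tr Y)=\Hom(-,D\Tr Y)$ supplied by Lemma~\ref{lem:I(X,Y)} is the crucial input. The $\CC_l$ and $\tau^-$ halves are formally dual but mildly asymmetric, because no analogue of Lemma~\ref{lem:I(X,Y)} on the projective side is recorded; routing the $\tau^-$ object-level identification through Lemma~\ref{lem:representable}(2) and its functoriality through quasi-inverse uniqueness is the cleanest way to sidestep this.
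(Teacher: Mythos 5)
Your proposal is correct and follows essentially the same route as the paper: both equalities come from Proposition~\ref{prop:C_r} combined with Lemma~\ref{lem:rep(Q)-ass}, and the identification $\tau\simeq D\Tr$ comes from Lemma~\ref{lem:AR-formula}(1) together with the collapse $\Homco(-,D\Tr Y)=\Hom(-,D\Tr Y)$ from Lemma~\ref{lem:I(X,Y)}. The only (harmless) divergence is on the $\tau^-$ side, where you invoke Lemma~\ref{lem:representable}(2) and uniqueness of quasi-inverses; the paper instead uses the naturality of Lemma~\ref{lem:AR-formula}(2) directly, the needed projective-side analogue of Lemma~\ref{lem:I(X,Y)} being automatic because $\CC$ is hereditary with enough projectives, so $(\CC_r)_\CP\to\underline{\CC_r}$ is already an equivalence.
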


Denote by $(\CC_r)_\CP$ the full subcategory of $\CC_r$ formed by objects without nonzero projective direct summands, and by $(\CC_l)_\CI$ the full subcategory of $\CC_l$ formed by objects without nonzero injective direct summands.
We obtain the induced functors $D\Tr\colon(\CC_r)_\CP\to(\CC_l)_\CI$ and $\Tr D\colon(\CC_l)_\CI\to(\CC_r)_\CP$. Since $\CC$ is hereditary and has enough projectives, the canonical functor $(\CC_r)_\CP\to\underline{\CC_r}$ is an equivalence. By Lemma~\ref{lem:I(X,Y)}, the canonical functor $(\CC_l)_\CI\to\overline{\CC_l}$ is an equivalence. Then we have the induced functors $D\Tr\colon\underline{\CC_r}\to\overline{\CC_l}$ and $\Tr D\colon\overline{\CC_l}\to\underline{\CC_r}$.

\begin{proof}
  We observe that $\Tr P=0$ for each projective object $P$. Then the first equality follows from Proposition~\ref{prop:C_r}(1) and Lemma~\ref{lem:rep(Q)-ass}(1) and (3). The second equality follows from Proposition~\ref{prop:C_r}(2) and Lemma~\ref{lem:rep(Q)-ass}(2) and (3).

  Let $X\in\CC$. By Lemmas~\ref{lem:AR-formula}(1) and \ref{lem:I(X,Y)}, for each $Y\in(\CC_r)_\CP$, we have an isomorphism $\overline\Hom(X,D\Tr Y)\simeq D\Ext^1(Y,X)$, which is natural in $X$ and $Y$ since $D\Tr Y$ has no nonzero injective direct summands. Similarly, for each $Y\in(\CC_l)_\CI$, we have $\underline\Hom(\Tr DY,X)\simeq D\Ext^1(X,Y)$, which is natural in $X$ and $Y$. Then the result follows from the construction of generalized Auslander-Reiten translation functors.
\end{proof}

Combining Proposition~\ref{prop:rep(Q)-AR} and \cite[Theorem~3.7]{BautistaLiuPaquette2013Representation}, we have the following direct consequence.

\begin{cor}
  Let $Q$ be a connected locally finite interval-finite quiver. Then the category $\rep^+(Q)$ has Auslander-Reiten duality if and only if $Q$ has neither left infinite path nor right infinite path, or else $Q$ is of the form
  \[\cdots\longrightarrow \circ\longrightarrow \cdots\longrightarrow \circ\longrightarrow \circ. \eqno\qed\]
\end{cor}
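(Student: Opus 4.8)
The plan is to unwind the definition of Auslander-Reiten duality and match the resulting condition to the quiver-theoretic criterion of \cite[Theorem~3.7]{BautistaLiuPaquette2013Representation}. Recall from the discussion following the definition of generalized Auslander-Reiten duality that $\CC=\rep^+(Q)$ has Auslander-Reiten duality if and only if $\CC_l=\CC=\CC_r$. So I would establish, as the heart of the argument, that these two equalities together say exactly that $\rep^+(Q)$ has almost split sequences on both sides, which is precisely the property characterized by \cite[Theorem~3.7]{BautistaLiuPaquette2013Representation}.

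To carry this out, first I would reduce each equality to a statement about almost split conflations. Every projective object lies in $\CC_r$ and $\CC_r$ is closed under direct summands, so $\CC_r=\CC$ holds if and only if every non-projective indecomposable object lies in $\CC_r$; by Proposition~\ref{prop:C_r}(1) this means that every non-projective indecomposable appears as the third term of an almost split conflation. Dually, since every injective object lies in $\CC_l$, Proposition~\ref{prop:C_r}(2) gives that $\CC_l=\CC$ if and only if every non-injective indecomposable appears as the first term of an almost split conflation. As $\rep^+(Q)$ is abelian with its canonical exact structure, almost split conflations are just almost split short exact sequences, and Lemma~\ref{lem:rep(Q)-ass}(3) is the tool that lets me pass between almost split sequences in $\rep(Q)$ and in $\rep^+(Q)$. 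Equivalently, one may read the two equalities off the explicit descriptions in Proposition~\ref{prop:rep(Q)-AR}: $\CC_r=\CC$ amounts to $\Tr X\in\rep^b(Q^{\op})$ for every $X$, while $\CC_l=\CC$ amounts to every indecomposable of $\rep^+(Q)$ being finite dimensional or injective.

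Combining the two reductions, $\rep^+(Q)$ has Auslander-Reiten duality precisely when it has almost split sequences, and I would then quote \cite[Theorem~3.7]{BautistaLiuPaquette2013Representation} to translate this into the stated dichotomy for $Q$. The main obstacle is conceptual rather than computational: one must verify that the abstract condition $\CC_l=\CC=\CC_r$ genuinely collapses to the classical two-sided almost split sequence property, and in particular that the exceptional linear quiver really satisfies both equalities even though it carries a left infinite path. The subtle half is $\CC_r=\CC$, because the almost split sequence $0\to D\Tr M\to E\to M\to 0$ furnished by Lemma~\ref{lem:rep(Q)-ass}(1) always exists in $\rep(Q)$ but only descends to an almost split conflation in $\rep^+(Q)$ when $D\Tr M$ is finite dimensional; checking that this finiteness holds for all non-projective indecomposables exactly in the two listed cases is the precise content borrowed from \cite[Theorem~3.7]{BautistaLiuPaquette2013Representation}.
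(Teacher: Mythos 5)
Your proposal is correct and follows essentially the same route as the paper, which simply records the corollary as a direct consequence of Proposition~\ref{prop:rep(Q)-AR} (equivalently, Proposition~\ref{prop:C_r} together with Lemma~\ref{lem:rep(Q)-ass}) combined with \cite[Theorem~3.7]{BautistaLiuPaquette2013Representation}. Your additional unwinding of the equivalence between $\CC_l=\CC=\CC_r$ and the existence of enough almost split sequences is exactly the implicit content of the paper's one-line argument.
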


\section*{Acknowledgements}

The author thanks his advisor Professor~Xiao-Wu Chen for his guidance and encouragement, and thanks Dawei Shen for some suggestions and comments.


\end{document}